\newtheorem{thm}{{\bf Theorem}}[section]
\newtheorem{lem}[thm]{{\bf Lemma}}
\newtheorem{prop}[thm]{{\bf Proposition}}
\newtheorem{rem}[thm]{Remark}
\newtheorem{ques}[thm]{Question}
\newtheorem{conj}[thm]{Conjecture}
\numberwithin{equation}{section}
\begin{document}

%
%

\title{Entropy vs volume for pseudo-Anosov maps}

\author[E. Kin]{%
    Eiko Kin
}
\address{%
       Department of Mathematical and Computing Sciences \\
       Tokyo Institute of Technology \\
        Ohokayama, Meguro \\
        Tokyo 152-8552 Japan
}
\email{%
        kin@is.titech.ac.jp
}

\author[S. Kojima]{%
    Sadayoshi Kojima
}
\address{%
        Department of Mathematical and Computing Sciences \\
        Tokyo Institute of Technology \\
        Ohokayama, Meguro \\
        Tokyo 152-8552 Japan
}
\email{%
        sadayosi@is.titech.ac.jp
}

\author[M. Takasawa ]{%
    Mitsuhiko Takasawa
}
\address{%
        Department of Mathematical and Computing Sciences \\
        Tokyo Institute of Technology \\
        Ohokayama, Meguro \\
        Tokyo 152-8552 Japan
}
\email{%
        takasawa@is.titech.ac.jp
}

\subjclass[2000]{%
	Primary 37E30, 57M27, Secondary 57M55
}

\keywords{%
	mapping class group, braid group, pseudo-Anosov, 
	dilatation, entropy, hyperbolic volume 
}

\thanks{%
The first author is partially supported by Grant-in-Aid for Young Scientists (B) (No. 20740031), 
MEXT, and 
the second author for Scientific Research (A) (No. 18204004), JSPS, Japan
} 

\begin{abstract} 
We will discuss theoretical and 
experimental results concerning comparison of 
entropy of pseudo-Anosov maps and volume of their mapping tori. 
Recent study of Weil-Petersson geometry of the Teichm\"uller space 
tells us that they admit linear inequalities for both sides under some 
bounded geometry condition.  
We construct a family of pseudo-Anosov maps which 
violates one side of inequalities under unbounded geometry 
setting, 
present an explicit bounding constant for a punctured torus,  
and provide several observations based on experiments. 
\end{abstract}

\maketitle

%
%

\section{Introduction} 
\label{section_introduction}

Let $\Sigma=\Sigma_{g,p}$ be an orientable surface of genus $g$ with $p$ punctures, 
$\mathcal{M}(\Sigma)$  the mapping class group of  $\Sigma$, 
and assume that  $3g - 3 + p \geq 1$.  
According to Thurston \cite{Thurston2}, 
the element of  $\mathcal{M}(\Sigma)$  is classified into three 
classes, 
namely, 
periodic, pseudo-Anosov and reducible.  
A pseudo-Anosov element  $\phi$  of  $\mathcal{M}(\Sigma)$  defines two
natural numerical invariants. 
One is the entropy  ${\rm ent}(\phi)$  which is the logarithm of a stretching factor of 
invariant foliation of  $\phi$   
(often called a dilatation of  $\phi$).   
The other is, 
thanks to the fibration theorem of Thurston \cite{Thurston3}, 
the hyperbolic volume  ${\rm vol}(\phi)$  of its mapping torus, 
\begin{equation*} 
	{\Bbb T}(\phi) =\Sigma \times [0,1]/ \sim
\end{equation*}  
where $\sim$ identifies $(x,1)$ with $(f(x),0)$  for 
some representative  $f$  of  $\phi$.   

Our study is motivated by experiments of the last author in his 2000 thesis 
\cite{Takasawa}  on comparison of  ${\rm ent}(\phi)$  and  ${\rm vol}(\phi)$.   
To see this, 
we let  $\mathcal{M}^{\mathrm{pA}}(\Sigma)$  be 
the set of pseudo-Anosov mapping classes of $\mathcal{M}(\Sigma)$ and put 
\begin{equation*} 
	\mathcal{E}(\Sigma) = \{(\mathrm{vol}(\phi), \mathrm{ent}(\phi)) 
	\ |\ \phi \in \mathcal{M}^{\mathrm{pA}}(\Sigma)\}.
\end{equation*} 
Figure \ref{fig_genus2} is the plot of  $\mathcal{E}(\Sigma_{2,0})$  for 
all pseudo-Anosov classes represented by words of length 
at most 7 with respect to 
the Lickorish generator.  


Although the samples may not be sufficiently many, 
it would be conceivable to predict that  ${\rm ent}(\phi)/{\rm vol}(\phi)$  
are bounded from both sides, 
namely, 
one may expect to have a constant  $C$  depending only on the topology of  
$\Sigma$  satisfying 
\begin{equation}\label{Eq:Bi-Lipschitz} 
	C^{-1} {\rm vol}(\phi) \leq {\rm ent}(\phi) \leq C {\rm vol}(\phi). 
\end{equation}
This expectation turns out to be true for bounded geometry case, 
which can be derived straightforwardly from recent works by 
Brock \cite{Brock}, Minsky \cite{Minsky} and Brock-Mazur-Minsky \cite{BMM}  
together with a comparison of Teichm\"uller and 
Weil-Petersson metrics on the Tehchm\"uller space.  
Moreover, 
the left inequality holds without assuming boundedness in geometry by 
a comparison of two metrics above.  
However, 
the theory does not say very much about 
accurate value of the constant  $C$.    

From computing viewpoints, 
it is rather easy to work with not closed but punctured disk cases, 
which have nice description in terms of braid data.  
Let  $D_n$  be an $n$-punctured disk.  
See Figure \ref{fig_br6} for more accurate plots for 
the case of  $D_6$.

\begin{figure}[htbp]
\begin{minipage}[htbp]{0.45\textwidth}
\begin{center}
\includegraphics[height=\textwidth, angle=-90]{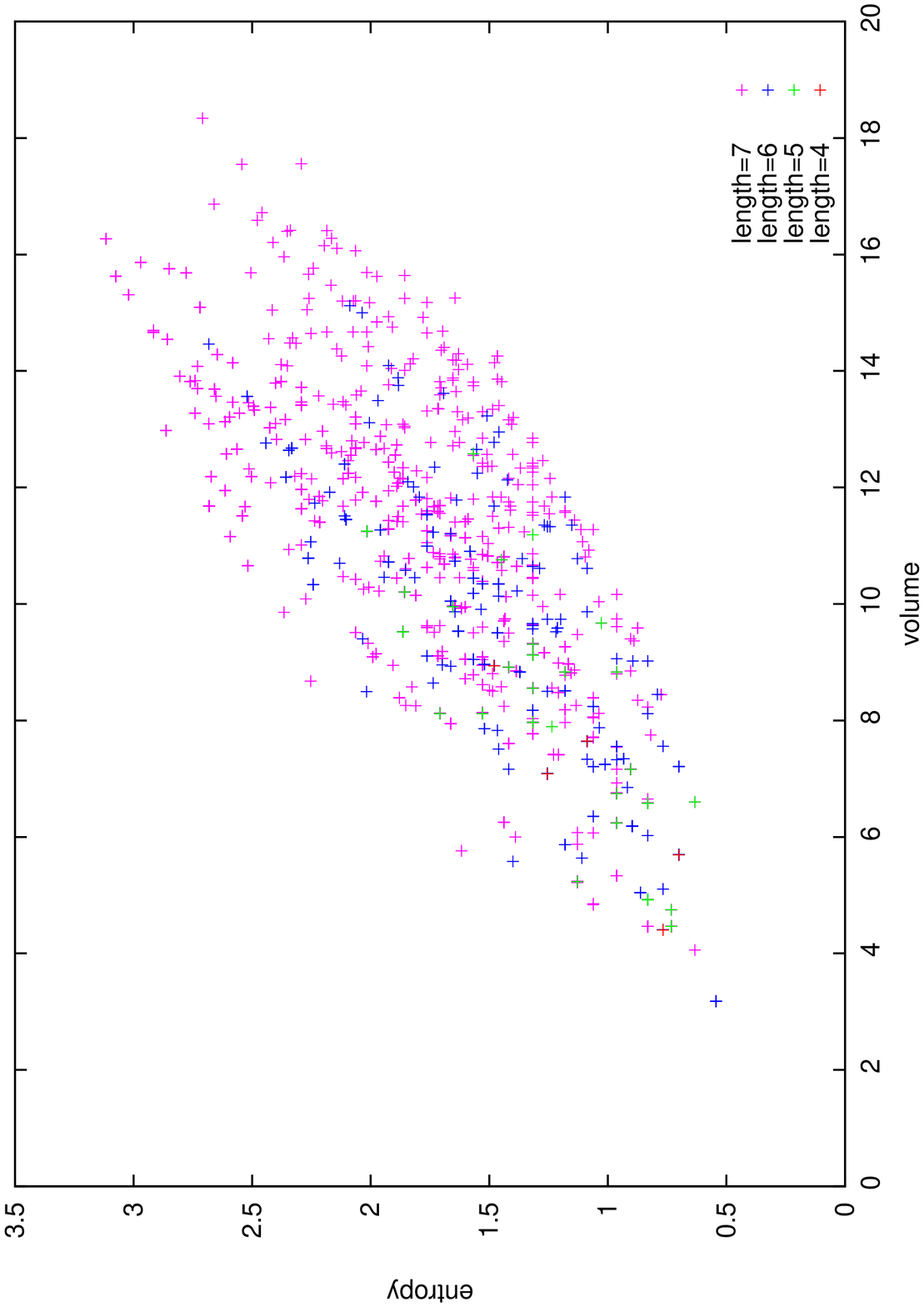}
\caption{Entropy vs Volume for  $\Sigma_{2,0}$.} 
\label{fig_genus2}
\end{center}
\end{minipage}
\hfill
\begin{minipage}[htbp]{0.45\textwidth}
\begin{center}
\includegraphics[height=\textwidth, angle=-90]{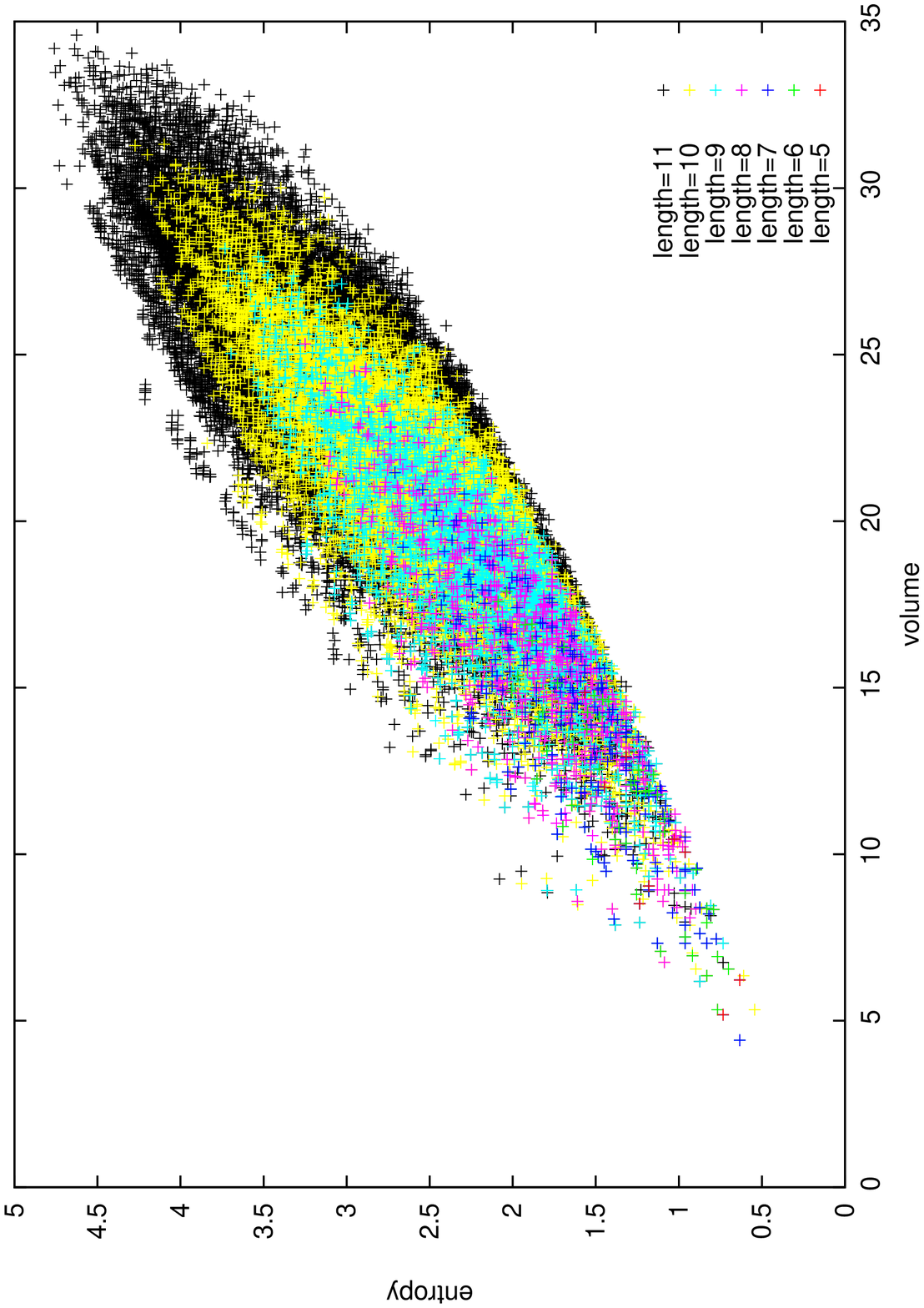}
\caption{Entropy vs Volume for  $D_6$.} 
\label{fig_br6_small}
\end{center}
\end{minipage}
\end{figure} 

The purpose of this paper is to present two theoretical results and 
to provide some observations based on our experiments for 
pseudo-Anosov maps on punctured disks.  

More precisely, 
we explicitly construct  a family of pseudo-Anosov maps  $\phi_k$ 
such that  $\mathrm{ent}(\phi_k)$  goes to infinity as 
$n$ goes to infinity while $\mathrm{vol}(\phi_k)$ remains bounded from above 
in {\bf Theorem \ref{thm_entdiverge}},    
which violates the inequality of the right hand side of  (\ref{Eq:Bi-Lipschitz}),  
where geometry in this family is unbounded.  
We also give an explicit constant for  $\Sigma_{1,1}$.    
Namely, 
we prove in {\bf Theorem \ref{thm_punctured-torus}}  that 
for each $\phi \in  \mathcal{M}^{\mathrm{pA}}(\Sigma_{1,1})$, we have 
\begin{equation*} 
	\frac{\rm{ent}(\phi)}{\mathrm{vol}(\phi)} > 
	\frac{ \log (\frac{3+  \sqrt{5}}{2})}{2  v_8}\approx 0.1313,
\end{equation*} 
where $v_8 \approx 3.6638$ is the volume of a regular ideal octahedron. 
This bound is not best possible unfortunately.
However, 
if we restrict our attention  to mapping classes of {\it block length} $1$, 
we obtain the best possible lower bound 
\begin{equation*} 
	\frac{ \log (\frac{3+  \sqrt{5}}{2})}{2  v_3} \approx 0.4741
\end{equation*} 
where $v_3 \approx 1.0149$ is the volume of  
a regular ideal tetrahedron in {\bf Proposition~\ref{prop_block-length-1}}. 
The organization of this paper is as follows.
After some preliminaries in the next section,
we present what can be known from
recent results and prove Theorem \ref{thm_Bi-Lipschitz} in section 3.
We describe our experiments and observations derived from them
in section 4, and then discuss sharp bounds in section 5.

\bigskip

\noindent
Acknowledgments: 
We would like to thank Kazuhiro Ichihara, Masaharu Ishikawa 
and Kenneth J.~Shackleton  for many discussions and conversations. 
Our thanks also go to Shigenori Matsumoto for helpful comments.

%
%

\section{Preliminaries} 
\label{section_preliminaries}


\subsection{Perron-Frobenius theorem}
\label{subsection_PF}

Let $M=(m_{ij}) $ and $N=(n_{ij})$ be matrices with the same size. 
We shall write  $M \ge N$ (resp. $M >N$) 
whenever  $m_{ij} \ge n_{ij}$ (resp. $m_{ij} > n_{ij}$) for each $ij$. 
We say that $M$ is {\it positive} (resp. {\it non-negative}) 
if $M >{\bf 0}$ (resp. $M \ge {\bf 0}$), where ${\bf 0}$ is the zero matrix.  

For a square and non-negative matrix $T$,  let $\lambda(T)$ be its spectral radius, 
that is the maximal absolute value of eigenvalues of $T$.  
We say that  $T$ is {\it irreducible} if for every pair of indices $i$ and $j$, there exists an integer $k=k_{ij}>0$ such that the $(i,j)$ entry of 
$T^k$ is strictly positive. 
The matrix  $T$ is {\it primitive} if there exists an integer $k >0$ such that the matrix $T^k $ is positive.  
By definition, a primitive matrix is irreducible.  
A primitive matrix $T$ is {\it Perron-Frobenius}
 if $T$ is an integral matrix. 
The following theorem is commonly referred  to as  the Perron-Frobenius theorem, 
see for example Theorem 1.1 in \cite{Seneta}.  

\begin{thm}[Perron-Frobenius]
\label{thm_PFtheorem}
Let $T$ be a primitive matrix. 
Then, there exists an eigenvalue $\lambda>0$ of $T$ such that 
\begin{itemize}
\item[(1)]  $\lambda$ has strictly positive left and right eigenvectors ${\bf \widehat{x}}$ and ${\bf y}$ respectively, and 
\item[(2)]  $\lambda>|\lambda'|$ for any eigenvalue $\lambda' \ne \lambda$ of $T$. 
\end{itemize}
If $T \ge B \ge {\bf 0}$ and $\beta$ is an eigenvalue of $B$, then $\lambda \ge |\beta|$. 
Moreover $\lambda=  |\beta|$ implies $T= B$. 
\end{thm}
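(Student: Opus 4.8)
The plan is to prove the theorem through the Collatz--Wielandt variational characterization of $\lambda$, which yields at once the dominant eigenvalue, its strictly positive eigenvectors, and the comparison assertions. Let $n$ be the size of $T$ and fix $k$ with $T^{k}>{\bf 0}$. For a nonzero $x\ge{\bf 0}$ put
\[
r(x)\;=\;\max\{\rho\ge 0 : Tx\ge \rho x\}\;=\;\min_{i\,:\,x_i>0}\frac{(Tx)_i}{x_i},
\]
and set $\lambda=\sup\{r(x): x\ge{\bf 0},\ x\ne{\bf 0}\}$; since $r$ is homogeneous of degree $0$, the supremum may be taken over the probability simplex $\Delta$. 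The function $r$ is only upper semicontinuous on $\Delta$, so the crucial first step I would isolate is a smoothing: with $S=I+T^{k}\ge{\bf 0}$, which commutes with $T$, the inequality $Tx\ge\rho x$ gives $T(Sx)=S(Tx)\ge\rho(Sx)$, hence $r(Sx)\ge r(x)$; moreover $Sx>{\bf 0}$ whenever $x\ge{\bf 0}$, $x\ne{\bf 0}$, because $T^{k}x>{\bf 0}$. Thus $\lambda=\sup_{x\in\Delta}r(Sx)$, and since $r$ is continuous on strictly positive vectors while $\{Sx:x\in\Delta\}$ is compact, the supremum is attained: $\lambda=r({\bf y})$ for some ${\bf y}>{\bf 0}$.

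Next I would upgrade $r({\bf y})=\lambda$ to $T{\bf y}=\lambda{\bf y}$. If $T{\bf y}\ne\lambda{\bf y}$, then $d:=T{\bf y}-\lambda{\bf y}\ge{\bf 0}$ is nonzero, so $Sd>{\bf 0}$, i.e. $T(S{\bf y})>\lambda(S{\bf y})$; hence $r(S{\bf y})>\lambda$, contradicting the definition of $\lambda$. So ${\bf y}>{\bf 0}$ is a right eigenvector, and applying the same argument to $T^{\mathrm T}$ (also primitive, and with the same characteristic polynomial) gives a strictly positive left eigenvector ${\bf \widehat{x}}$. That $\lambda>0$ follows from $\lambda^{k}{\bf y}=T^{k}{\bf y}>{\bf 0}$. (If one also wants uniqueness up to scale: given a real eigenvector $z$ for $\lambda$, the largest $t$ with ${\bf y}-tz\ge{\bf 0}$, taken after possibly replacing $z$ by $-z$, produces a nonnegative $\lambda$-eigenvector with a vanishing coordinate, which by $T^{k}>{\bf 0}$ must be zero, so ${\bf y}=tz$.)

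For the comparison, let $T\ge B\ge{\bf 0}$ and $Bv=\beta v$ with $v\ne{\bf 0}$, and let $|v|$ denote the entrywise modulus. The triangle inequality gives $T|v|\ge B|v|\ge|Bv|=|\beta|\,|v|$, so $r(|v|)\ge|\beta|$ and thus $\lambda\ge|\beta|$. If $\lambda=|\beta|$, then $T|v|\ge\lambda|v|$, and the smoothing argument of the previous paragraph (a strict inequality would force $r(S|v|)>\lambda$) forces $T|v|=\lambda|v|$; then $\lambda^{k}|v|=T^{k}|v|>{\bf 0}$, so $|v|>{\bf 0}$. Now $\lambda|v|=T|v|\ge B|v|\ge|\beta|\,|v|=\lambda|v|$ collapses to $T|v|=B|v|$, i.e. $\sum_j(t_{ij}-b_{ij})|v_j|=0$ with each $|v_j|>0$ and each $t_{ij}-b_{ij}\ge 0$, hence $T=B$. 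Finally, for part (2), suppose $Tv=\lambda'v$ with $|\lambda'|=\lambda$; taking $B=T$ above gives $T|v|=\lambda|v|$ and $|v|>{\bf 0}$, so from $T^{k}v=(\lambda')^{k}v$ and $T^{k}|v|=\lambda^{k}|v|$ we get, for each $i$, $\sum_j(T^{k})_{ij}|v_j|=\bigl|\sum_j(T^{k})_{ij}v_j\bigr|$ with all $(T^{k})_{ij}>0$ and all $v_j\ne 0$; equality in the triangle inequality then forces the $v_j$ to share a common argument, so $v$ is a scalar multiple of $|v|$ and $\lambda'=\lambda$, a contradiction.

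The main obstacle is exactly the failure of $r$ to be continuous on $\Delta$, which would otherwise make existence of the maximizer immediate; the device resolving it is the positive smoothing operator $S=I+T^{k}$, and the same operator (together with the positivity of $T^{k}$) is what powers the strict-inequality steps, the passage to strictly positive eigenvectors, and the equality case of the comparison. Once $S$ is in place the remaining arguments are short, so I would set it up carefully at the outset and reuse it throughout.
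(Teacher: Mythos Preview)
Your argument is correct and is essentially the classical Collatz--Wielandt proof: the variational quantity $r(x)$, the smoothing operator $S=I+T^{k}$ to force strict positivity and to turn the inequality $T{\bf y}\ge\lambda{\bf y}$ into the eigenvalue equation, and the triangle-inequality manipulation for the comparison and its equality case are all handled cleanly. One small wording issue: since $r$ is upper semicontinuous on the compact simplex $\Delta$, the supremum is in fact attained without any smoothing; the real purpose of $S$ is not attainment per se but ensuring the maximizer is \emph{strictly} positive and powering the strict-inequality step $T(S{\bf y})>\lambda(S{\bf y})$. This does not affect the validity of what you wrote.

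As for the comparison with the paper: the paper does not give a proof of this statement at all. Theorem~\ref{thm_PFtheorem} is quoted as background (``commonly referred to as the Perron--Frobenius theorem'') with a reference to Seneta's monograph, and is used only through the monotonicity of the spectral radius under entrywise domination (in the proof of Theorem~\ref{thm_entdiverge}). So you have supplied a self-contained proof where the paper simply cites the literature; your approach is the standard one and, incidentally, the one found in the cited reference.
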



\subsection{Pseudo-Anosov mapping classes} 

The {\it mapping class group} $\mathcal{M}(\Sigma)$ is the group of isotopy classes of orientation preserving homeomorphisms of $\Sigma$, 
where the group operation is induced by composition of homeomorphisms. 
An element of the mapping class group  is called a {\it mapping class}. 

A homeomorphism $\Phi: \Sigma \rightarrow \Sigma$ is {\it pseudo-Anosov}  
 if  there exists a constant $\lambda= \lambda(\Phi)>1$ called the {\it dilatation of} $\Phi$  
 and there exists a pair of transverse measured foliations $\mathcal{F}^s$ and $\mathcal{F}^u$ such that 
\begin{equation*} 
	\Phi(\mathcal{F}^s)= \frac{1}{\lambda} \mathcal{F}^s \ \mbox{and}\  
		\Phi(\mathcal{F}^u)= \lambda \mathcal{F}^u. 
\end{equation*} 
 A mapping class  which contains a pseudo-Anosov homeomorphism is called {\it pseudo-Anosov}. 
 We define the dilatation of a pseudo-Anosov  mapping class $\phi$, denoted by $\lambda(\phi)$, to be the dilatation of a pseudo-Anosov homeomorphism of $\phi$. 
Fixing $\Sigma $, the dilatation $\lambda(\phi)$ for $\phi \in \mathcal{M}^{\mathrm{pA}}(\Sigma)$ is known to be an algebraic integer with a 
bound on its degree depending only on $\Sigma$. 
Thus the set 
\begin{equation*} 
	\mathrm{Dil}(\Sigma) = \{\lambda(\phi)>1\ |\ \phi \in \mathcal{M}^{\mathrm{pA}}(\Sigma)\}
\end{equation*} 
is discrete and in particular achieves its infimum  $\lambda(\Sigma)$. 
 
The ({\it topological}) {\it entropy} $\mathrm{ent}(f)$ is a measure of the complexity of a continuous self-map $f$ on a compact metric space, see for instance  \cite{Walters}. 
For a pseudo-Anosov homeomorphism $\Phi$, 
the equality $\mathrm{ent}(\Phi)= \log (\lambda(\Phi))$ holds \cite{FLP} and 
$\mathrm{ent}(\Phi)$ attains the minimal entropy among all  homeomorphisms  $f$ 
which are isotopic to $\Phi$. 

Choosing a representative $f: \Sigma \rightarrow \Sigma$ of $\phi \in \mathcal{M}(\Sigma)$, 
we form the mapping torus 
$${\Bbb T}(\phi) =\Sigma \times [0,1]/ \sim ,$$ 
where $\sim$ identifies $(x,0)$ with $(f(x),1)$. 
Then $\phi $ is pseudo-Anosov  if and only if ${\Bbb T}({\phi})$ admits 
a complete hyperbolic structure of finite volume \cite{Thurston3, OK}. 
Since such a structure is unique up to isometry, 
it makes sense to speak of  the volume $\mathrm{vol}(\phi)$ of $\phi$,  
the hyperbolic volume of ${\Bbb T}(\phi)$.


\subsection{Generating sets of mapping class groups}

Let $G$ be a group with a symmetric generating set $\mathcal{G}$ 
so that if  $h \in \mathcal{G}$ then $h^{-1} \in \mathcal{G}$. 
The {\it word length} of $h$ {\it relative to} $\mathcal{G}$ is defined by 
$\min\{k\ |\ h = h_1 h_2 \cdots h_k, \ h_i \in \mathcal{G}\}$.

We introduce a generating set of 
the mapping class group   $\mathcal{M}(D_n)$  by 
\begin{equation*} 
	\mathcal{G}(D_n) =  \{\hat{t}_{c_1}^{\pm 1}, \cdots, \hat{t}_{c_{n-1}}^{\pm 1}\}, 
\end{equation*} 
where $\hat{t}_{c_i}$ denotes the mapping class which represents the positive half twist about the arc $c_i$ from the $i^{\mathrm{th}}$ puncture to the $(i+1)^{\mathrm{st}}$ (Figure~\ref{fig_halftwist}).

\begin{figure}[htbp]
\begin{center}
\includegraphics[width=5.5in]{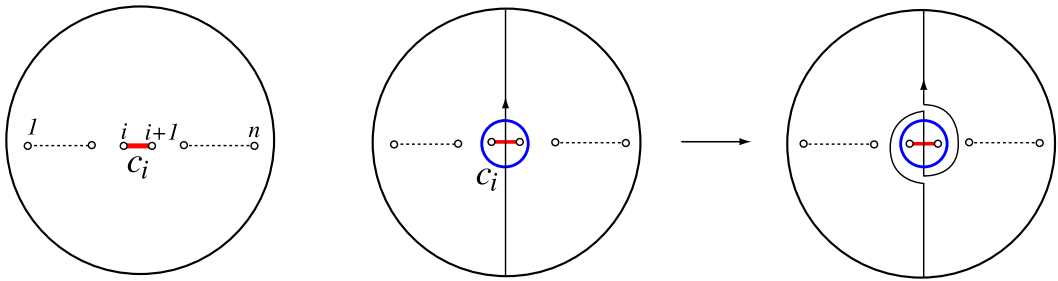}
\caption{(left) arc $c_i$, (right) positive half twist $\hat{t}_{c_i}$.} 
\label{fig_halftwist}
\end{center}
\end{figure}

The  $n$-braid group $B_n$ and the mapping class group $\mathcal{M}(D_n)$ are related by the surjective  homomorphism 
\begin{eqnarray*}
\Gamma: B_n &\rightarrow& \mathcal{M}(D_n)
\\
\sigma_i &\mapsto& \hat{t}_{c_i}
\end{eqnarray*}
where $\sigma_i$ for $i \in \{1, \cdots, n-1\}$ is the  Artin generator (Figure~\ref{fig_artin}(left)).  
The kernel of $\Gamma$ is the center of $B_n$ which is generated by a full twist braid 
$(\sigma_1 \sigma_2 \cdots \sigma_{n-1})^n$. 
Note that $\mathcal{M}(D_n)$ is isomorphic to a subgroup of $\mathcal{M}(\Sigma_{0,n+1})$ by replacing the boundary of $D_n$ with the $(n+1)^{\mathrm{st}}$  puncture. 
In the rest of the paper we regard a mapping class of $\mathcal{M}(D_n)$ to be a mapping class of $\mathcal{M}(\Sigma_{0,n+1})$ 
fixing the $(n+1)^{\mathrm{st}}$  puncture.

We say that a braid $b \in B_n$ is {\it pseudo-Anosov} if $\Gamma(b) \in \mathcal{M}(D_n)$ is pseudo-Anosov, and when this is the  case,  
$\mathrm{vol}(\Gamma(b))$ equals the hyperbolic volume of  the link complement $S^3 \setminus \overline{b}$ in the $3$--sphere $S^3$, where 
$\overline{b}$ is the braided link of $b$ which is a union of the closed braid of $b$ and the braid axis (Figure~\ref{fig_artin}(right)). 
Hereafter we represent a mapping class of  $ \mathcal{M}(D_n)$ by a braid and we denote $\Gamma(b) \in \mathcal{M}(D_n)$ by $b$.

\begin{figure}[htbp]
\begin{center}
\includegraphics[width=3.7in]{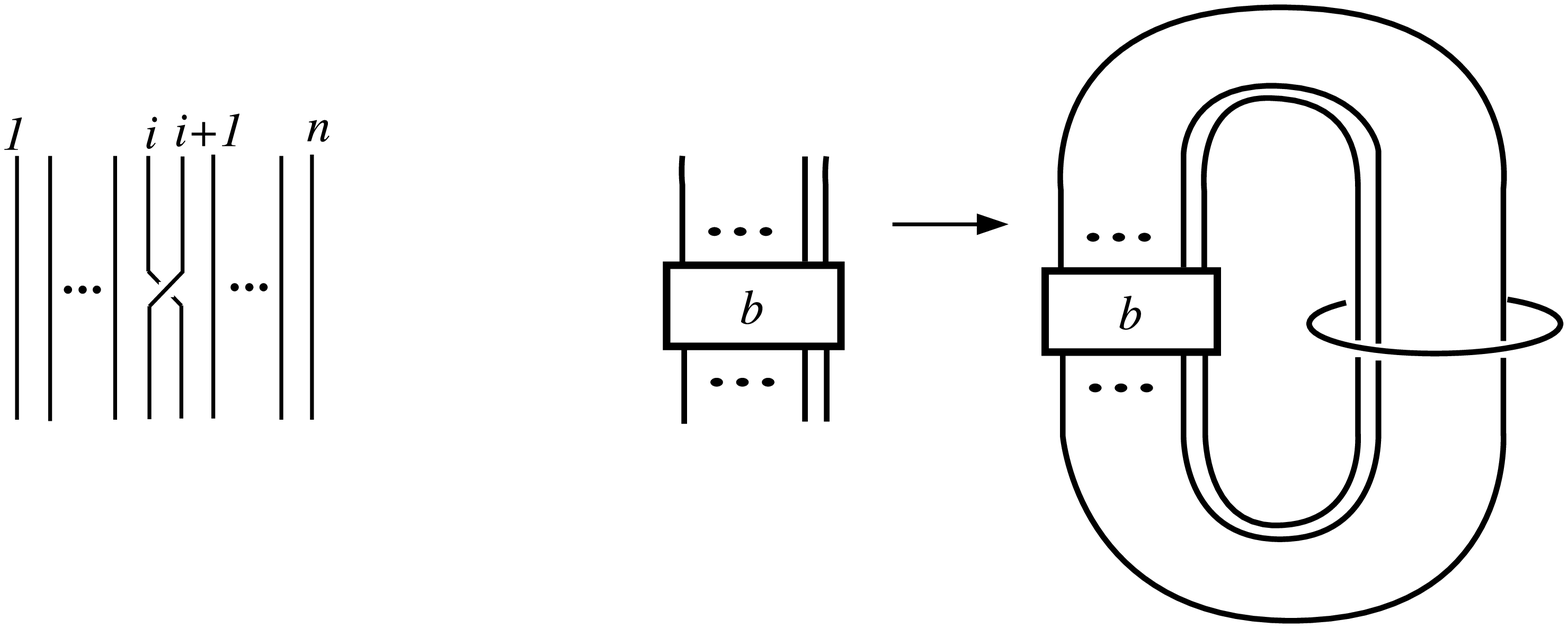}
\caption{(left) generator $\sigma_i$, (right) link $\overline{b}$ obtained from a braid $b$.}
\label{fig_artin}
\end{center}
\end{figure}

Given a simple closed curve $\alpha$ on $\Sigma$, let $t_{\alpha} $ be a mapping class which represents the positive Dehn twist about $\alpha$. 
Then $\mathcal{G}(\Sigma_{1,1})= \{t_a^{\pm 1}, t_b^{\pm 1}\}$ is a generating set for $\mathcal{M}(\Sigma_{1,1})$, 
where $a$ and $b$ are  the meridian and the longitude of a once-punctured torus.

%
%

\section{Theory for entropy vs volume}  
\label{section_theory}


\subsection{Linear inequalities in bounded geometry}

We here briefly review what could be known about 
entropy vs volume for  $\Sigma$  from 
the existing theory.  

To see this more precisely, 
let us introduce two norms for a pseudo-Anosov  $\phi$  with 
respect to metrics on the Teichm\"uller space.  
Let  $|| \phi ||_*$  be the translation distance of  $\phi$  with respect 
to the metric  $*$,   
where  $*$  is the Teichm\"uller metric which we 
denote by  $*=T$  or the Weil-Petersson metric by  $* = WP$.  
This is nothing but the minimal distance of the orbit of the action of  
$\phi$  on the Teichm\"uller space with respect to corresponding metrics.  
Notice here that 
\begin{equation*}
	{\rm ent}(\phi) = || \phi ||_T 
\end{equation*} 

Starting point is a seminal result by Brock  \cite{Brock},  
which shows that there is a constant  $D$  depending only 
the topology of  $\Sigma$  so that
\begin{equation*}
	D^{-1} \, {\rm vol}(\phi) \leq || \phi ||_{WP} \leq D \, {\rm vol}(\phi)
\end{equation*}
for any pseudo-Anosov  $\phi$  on  $\Sigma$.  
Thus we would like to compare two norms.  
The Teichm\"uller distance is originally studied as a distance derived from
quasi-conformal maps between two Riemann surfaces, 
and Linch  \cite{Linch}  succeeded to obtain comparison of two metrics directly.  
The modern theory introduces an infinitesimal interpretation of 
Teichm\"uller metric, 
see for instance \cite{GL}, 
and it leads us to Linch's inequality 
\begin{equation*}
	|| \phi ||_{WP} \leq - 2\pi \chi(\Sigma) || \phi ||_T 
\end{equation*}
just based on the Cauchy-Schwarz inequality between 
infinitesimal forms of two metrics, 
see for instance in  \cite{Royden}.  
This together with Brock's inequality immediately implies 
the first half of Theorem \ref{thm_Bi-Lipschitz} which will be stated below.  

The inequality of the right hand side in  (\ref{Eq:Bi-Lipschitz}) dose not hold in general 
as we will see explicitly in Theorem \ref{thm_entdiverge}.  
However we would like to have control 
under some bounding condition on the geometry of  $\phi$. 
The deep analysis carried out for Teichm\"uller metric by Minsky \cite{Minsky}  
and for Weil-Petersson metric by Brock-Mazur-Minsky  \cite{BMM}  are the ones 
we are looking for.  
Just one of conclusions of their works could be stated as follows for our purpose.  

\begin{thm}[Minsky \cite{Minsky}, Brock-Mazur-Minsky \cite{BMM}]
For any  $\varepsilon > 0$, 
there exists  $\delta > 0$  such that 
both Teichm\"uller and Weil-Petersson geodesics 
invariant by the action of a pseudo-Anosov  $\phi$  has no intersection 
with the subset of the Teichm\"uller space 
consisting of hyperbolic surfaces with closed 
geodesic of length  $\leq \delta$  if  
$\Bbb{T}(\phi)$  contains no closed geodesics of length  $\leq \varepsilon$.  
\end{thm}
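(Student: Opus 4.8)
The plan is to prove a quantitative contrapositive: if one of the two invariant geodesics of $\phi$ in the Teichm\"uller space passes through a point whose underlying hyperbolic surface carries a simple closed curve $\alpha$ of length $\leq\delta$, then $\Bbb{T}(\phi)$ contains a closed geodesic of length $\leq\varepsilon(\delta)$, where $\varepsilon(\delta)\to 0$ as $\delta\to 0$ at a rate depending only on the topology of $\Sigma$; the theorem then follows by choosing $\delta$ so small that $\varepsilon(\delta)$ falls below the prescribed $\varepsilon$. Both the ``Teichm\"uller-space side'' and the ``$3$--manifold side'' of this implication are to be controlled by one common piece of data attached to $\phi$, namely the subsurface coordinates of its stable and unstable foliations: for an essential subsurface $Y\subseteq\Sigma$ write $d_Y(\mathcal{F}^s,\mathcal{F}^u)$ for the distance between the subsurface projections of $\mathcal{F}^s$ and $\mathcal{F}^u$ in the curve complex of $Y$, or in the annular complex of $Y$ when $Y$ is an annulus, and for a simple closed curve $\alpha$ let $K(\alpha)$ be the coefficient assembled from $d_{\alpha}(\mathcal{F}^s,\mathcal{F}^u)$ together with the $d_Y(\mathcal{F}^s,\mathcal{F}^u)$ over the one or two non-annular subsurfaces $Y$ with $\alpha\subseteq\partial Y$.

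I would first handle the Teichm\"uller geodesic. Its $\phi$-axis is precisely the Teichm\"uller geodesic determined by the projective classes of $\mathcal{F}^s$ and $\mathcal{F}^u$, so Minsky's product-region theorem and his extremal-length estimates \cite{Minsky} apply directly: the minimal extremal length of $\alpha$ along this geodesic is comparable, with constants depending only on $\chi(\Sigma)$, to the reciprocal of an explicit increasing function of $K(\alpha)$. Consequently $\alpha$ attains hyperbolic length $\leq\delta$ somewhere on the axis if and only if $K(\alpha)\geq R(\delta)$ for a threshold with $R(\delta)\to\infty$ as $\delta\to 0$. So the hypothesis that the axis meets the $\delta$--thin part forces the existence of a curve $\alpha$ with $K(\alpha)\gtrsim R(\delta)$.

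The next step transports this to $\Bbb{T}(\phi)$. Because $\phi$ is pseudo-Anosov, $\Bbb{T}(\phi)$ is a fibered hyperbolic $3$--manifold whose infinite cyclic cover is doubly degenerate with ending laminations $\mathcal{F}^s$ and $\mathcal{F}^u$; hence the bi-infinite combinatorial data feeding the model-manifold and ending-lamination machinery of Minsky and Brock--Canary--Minsky is once again that of the pair $(\mathcal{F}^s,\mathcal{F}^u)$, now read $\langle\phi\rangle$--periodically along the cover. The Lipschitz model theorem then provides, for each curve $\alpha$, a two-sided comparison between the geodesic length $\ell_{\Bbb{T}(\phi)}(\alpha)$ and a function of the same $K(\alpha)$, with topology-only constants: $\ell_{\Bbb{T}(\phi)}(\alpha)$ is small exactly when $K(\alpha)$ is large. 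Putting the two steps together, if $\Bbb{T}(\phi)$ has no closed geodesic of length $\leq\varepsilon$ then $K(\alpha)$ is bounded for every $\alpha$ in terms of $\varepsilon$ and $\chi(\Sigma)$, hence below $R(\delta)$ for a suitable $\delta=\delta(\varepsilon,\Sigma)>0$, and therefore the Teichm\"uller axis avoids the $\delta$--thin part. For the Weil--Petersson axis one runs the same scheme, replacing Minsky's Teichm\"uller estimates by the parallel description of short curves along Weil--Petersson geodesics due to Brock--Mazur--Minsky \cite{BMM}, whose combinatorial coefficients again coincide with the model-manifold data.

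I expect the main difficulty to lie in making both ``if and only if'' comparisons uniform and quantitative at the same time: one has to verify that every comparison constant entering Minsky's product-region estimates and the Lipschitz model theorem depends only on $-\chi(\Sigma)$, which bounds the number and homeomorphism types of the competing subsurfaces, so that the resulting $\delta$ genuinely depends only on $\varepsilon$ and the topology of $\Sigma$. A second delicate point is the Weil--Petersson case, where the invariant axis is not literally the geodesic joining $\mathcal{F}^s$ and $\mathcal{F}^u$, so identifying its thin-part combinatorics with the model-manifold coefficients requires the full strength of \cite{BMM} rather than a soft argument. Finally one should be careful with the passage between the closed manifold $\Bbb{T}(\phi)$ and the periodic data on its infinite cyclic cover, checking that the shortest geodesic of $\Bbb{T}(\phi)$ and the largest periodic coefficient $K(\alpha)$ really do control one another up to constants depending only on $\Sigma$.
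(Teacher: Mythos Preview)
The paper does not prove this theorem at all: it is stated as a black-box result attributed to Minsky \cite{Minsky} and Brock--Mazur--Minsky \cite{BMM}, and is then used to deduce Theorem~\ref{thm_Bi-Lipschitz}. There is therefore no ``paper's own proof'' to compare your proposal against.

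That said, your sketch is a reasonable outline of the mechanism behind the cited results. The idea of controlling both the thin part of the Teichm\"uller/Weil--Petersson axis and the short geodesics of $\Bbb{T}(\phi)$ through the same subsurface-projection data $K(\alpha)$ is exactly the combinatorial bridge that the Minsky and Brock--Canary--Minsky model-manifold theory provides, and your contrapositive framing is the natural way to extract the statement. Your caveats are also well placed: the uniformity of constants in the Lipschitz model theorem, and the fact that the Weil--Petersson axis is not literally the geodesic with endpoints $[\mathcal{F}^s],[\mathcal{F}^u]$, are precisely the points where one must appeal to the full strength of \cite{BMM} rather than argue softly. If anything, you should be aware that \cite{BMM} was listed as ``in preparation'' at the time of this paper, so treating its conclusions as established inputs is appropriate for an expository sketch but would need care in a self-contained proof.
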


Since the part of the Teichm\"uller space by thick surfaces appeared above  
is invariant by the action of the mapping class group, 
and moreover the quotient is compact by Mumford, 
it leads us to show the other direction of 
the inequality. 
We just state the the result one could straightforwardly 
conclude from the works by Minsky  \cite{Minsky}  and Brock-Mazur-Minsky  \cite{BMM}  by 

\begin{thm}[Corollary to \cite{Minsky, BMM}]\label{thm_Bi-Lipschitz}
There exists a constant  $B = B(\Sigma)$  depending only on 
the topology of   $\Sigma$  such that 
\begin{equation*}
	B \, {\rm vol}(\phi) \leq {\rm ent}(\phi)
\end{equation*} 
holds for any pseudo-Anosov  $\phi$  on  $\Sigma$.  
Furthermore, 
for any  $\varepsilon > 0$, 
there exists a constant  $C = C(\varepsilon, \Sigma) > 1$  depending only on  $\varepsilon$  and 
the topology of  $\Sigma$  such that the inequality 
\begin{equation*}
	{\rm ent}(\phi) \leq C \, {\rm vol}(\phi)  
\end{equation*} 
holds for any pseudo-Anosov  $\phi$  on  $\Sigma$  whose  
mapping torus  $\Bbb{T}(\phi)$  has no closed geodesics of length  $\leq \varepsilon$.  
\end{thm}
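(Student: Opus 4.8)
The plan is to derive the first (lower) inequality from Linch's pointwise comparison together with Brock's estimate, and the second (upper) inequality from the input of Minsky and Brock--Mazur--Minsky together with Mumford compactness. Write $\mathcal{T}(\Sigma)$ for the Teichm\"uller space, and for a pseudo-Anosov $\phi$ let $\gamma_T$ and $\gamma_{WP}$ be the $\phi$-invariant geodesic axes for the Teichm\"uller and the Weil--Petersson metrics, along which $\|\phi\|_T=\mathrm{ent}(\phi)$ and $\|\phi\|_{WP}$ are respectively realised; that such a Weil--Petersson axis exists inside $\mathcal{T}(\Sigma)$, rather than in its completion, is standard from the $\mathrm{CAT}(0)$ geometry of the Weil--Petersson completion and the fact that a pseudo-Anosov acts as a semisimple isometry.

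For the lower bound I would integrate Linch's inequality $\|v\|_{WP}\le -2\pi\chi(\Sigma)\,\|v\|_T$, which holds at every point of $\mathcal{T}(\Sigma)$, along the segment of $\gamma_T$ from $X$ to $\phi X$. This gives $\|\phi\|_{WP}\le d_{WP}(X,\phi X)\le -2\pi\chi(\Sigma)\,\ell_T\!\left(\gamma_T|_{[X,\phi X]}\right)=-2\pi\chi(\Sigma)\,\mathrm{ent}(\phi)$, and substituting into Brock's estimate $\mathrm{vol}(\phi)\le D\,\|\phi\|_{WP}$ yields the first assertion with $B=B(\Sigma)=\bigl(-2\pi\chi(\Sigma)\,D\bigr)^{-1}>0$. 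No geometric boundedness is needed here, precisely because Linch's bound holds throughout $\mathcal{T}(\Sigma)$.

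For the upper bound, fix $\varepsilon>0$ and assume $\Bbb{T}(\phi)$ has no closed geodesic of length $\le\varepsilon$; let $\delta=\delta(\varepsilon)>0$ be the constant furnished by the theorem of Minsky and Brock--Mazur--Minsky, so that $\gamma_{WP}$ avoids the $\delta$-thin part, i.e.\ lies in the $\delta$-thick part $\mathcal{T}_{\ge\delta}(\Sigma)$. The crucial auxiliary claim is that there is $K=K(\delta,\Sigma)\ge 1$ with $\|v\|_T\le K\,\|v\|_{WP}$ for all $X\in\mathcal{T}_{\ge\delta}(\Sigma)$ and $v\in T_X\mathcal{T}(\Sigma)$: the function $[v]\mapsto \|v\|_T/\|v\|_{WP}$ is continuous on the projectivised tangent bundle and invariant under $\mathcal{M}(\Sigma)$, and since $\mathcal{T}_{\ge\delta}(\Sigma)$ is $\mathcal{M}(\Sigma)$-invariant with compact quotient by Mumford, it is bounded above on the portion of that bundle lying over $\mathcal{T}_{\ge\delta}(\Sigma)$. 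Granting this, choose $Y\in\gamma_{WP}$ with $d_{WP}(Y,\phi Y)=\|\phi\|_{WP}$; the Weil--Petersson geodesic segment from $Y$ to $\phi Y$ lies in $\mathcal{T}_{\ge\delta}(\Sigma)$, so its Teichm\"uller length is at most $K\,\|\phi\|_{WP}$, while it is at least $d_T(Y,\phi Y)\ge \|\phi\|_T=\mathrm{ent}(\phi)$ by definition of translation distance. Hence $\mathrm{ent}(\phi)\le K\,\|\phi\|_{WP}\le KD\,\mathrm{vol}(\phi)$, and one takes $C=C(\varepsilon,\Sigma)=\max\{KD,2\}>1$.

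The main obstacle is the auxiliary comparison of the two norms on $\mathcal{T}_{\ge\delta}(\Sigma)$. On all of $\mathcal{T}(\Sigma)$ the ratio $\|v\|_T/\|v\|_{WP}$ is genuinely unbounded, since the Weil--Petersson metric degenerates while the Teichm\"uller metric stays complete as one approaches the thin part; it is exactly the Minsky--Brock--Mazur--Minsky confinement of the invariant geodesics to the $\delta$-thick part, together with Mumford compactness, that makes $K$ finite, and this is how the dependence of $C$ on $\varepsilon$ enters. The remaining ingredients---continuity of the two infinitesimal forms on the tangent bundle, semisimplicity of pseudo-Anosov elements for the Weil--Petersson metric, and the elementary manipulations with path lengths and translation distances above---are routine and I would not belabour them.
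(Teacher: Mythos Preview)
Your argument is correct and follows essentially the same route as the paper's: Linch's comparison combined with Brock's quasi-equality for the lower bound, and the Minsky/Brock--Mazur--Minsky confinement of the invariant Weil--Petersson axis to the $\delta$-thick part together with Mumford compactness for the upper bound. You have simply supplied the details the paper leaves implicit, in particular the infinitesimal comparison $\|v\|_T\le K\|v\|_{WP}$ on $\mathcal{T}_{\ge\delta}(\Sigma)$ via compactness of the projectivised tangent bundle over the moduli-space thick part, and the integration along the Weil--Petersson axis.
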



\subsection{Entropy wins Volume in unbounded geometry} 

The following  asserts that the inequality of 
the right hand side of  (\ref{Eq:Bi-Lipschitz}) does not hold 
without bounded geometry condition.

\begin{thm}
\label{thm_entdiverge}
  There exists a  sequence of $\phi_k \in \mathcal{M}^{\mathrm{pA}}(\Sigma)$ 
such that $\displaystyle{\frac{{\rm ent}(\phi_k)}{{\rm vol}(\phi_k)}}$ is unbounded.  
\end{thm}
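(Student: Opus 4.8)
The plan is to keep $\Sigma$ fixed and to build the sequence by \emph{twisting along a fixed curve}, arranged so that all the mapping tori $\Bbb{T}(\phi_k)$ are Dehn fillings of one fixed cusped $3$--manifold while the dilatations blow up. Realize $\Sigma$ as a punctured disk $D_n$, so that we may argue with braids, and fix two multicurves $\{a_1,\dots,a_r\}$ and $\{b_1,\dots,b_s\}$ that together fill $\Sigma$. With $\gamma=a_1$ and $\psi_0=t_{a_2}\cdots t_{a_r}\,t_{b_1}^{-1}\cdots t_{b_s}^{-1}$, define
\[
  \phi_k=t_\gamma^{\,k}\,\psi_0\in\mathcal{M}(\Sigma),\qquad k=1,2,3,\dots .
\]
By Penner's construction of pseudo-Anosov homeomorphisms, each $\phi_k$ with $k\ge1$ lies in $\mathcal{M}^{\mathrm{pA}}(\Sigma)$, so it remains to prove that $\mathrm{vol}(\phi_k)$ stays bounded above while $\mathrm{ent}(\phi_k)\to\infty$; together these force $\mathrm{ent}(\phi_k)/\mathrm{vol}(\phi_k)\to\infty$.

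For the volume, push $\gamma$ into a single fibre of the fibration $\Bbb{T}(\psi_0)\to S^1$ and put $N=\Bbb{T}(\psi_0)\setminus\nu(\gamma)$, a cusped $3$--manifold that does not depend on $k$. It is classical that Dehn filling the $\gamma$--cusp of $N$ along the slope determined by $k$ (with respect to the fibre framing) recovers $\Bbb{T}(\phi_k)=\Bbb{T}(t_\gamma^{\,k}\psi_0)$; in the braid model, $N$ is the complement of a fixed link — the braided link of $\psi_0$ together with an unknot $C$ encircling the punctures enclosed by $\gamma$ — and $\Bbb{T}(\phi_k)$ is recovered from it by $(-1/k)$--surgery on $C$. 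Choosing the curve system so that $N$ is hyperbolic (a direct check in a concrete example), and using that hyperbolic volume strictly decreases under Dehn filling (Thurston), we obtain $\mathrm{vol}(\phi_k)<\mathrm{vol}(N)=:V_0<\infty$ for all $k\ge1$; in fact $\mathrm{vol}(\phi_k)\nearrow V_0$ by Thurston's hyperbolic Dehn surgery theorem.

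For the entropy, Penner's construction simultaneously supplies a train track $\tau$ on $\Sigma$, determined by the two multicurves and hence the same for all $k$, which is carried to itself by $\phi_k$ and whose non-negative transition matrix $M_k$ satisfies $\lambda(M_k)=\lambda(\phi_k)$. Here $M_k$ is a product of non-negative elementary matrices, one for each Dehn twist in the word defining $\phi_k$, in which the elementary matrix of $t_\gamma$ occurs $k$ times; since twisting $k$ times about the single carried curve $\gamma$ deforms the branch weights affinely in $k$, we may write $M_k=Q+kR$ with fixed $Q,R\ge{\bf 0}$. For a suitable choice of the curve system — one in which the twisting about $\gamma$ genuinely interacts with the rest of the word rather than being screened off — the matrix $R$ has positive spectral radius $\lambda(R)>0$. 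Then $M_k\ge kR\ge{\bf 0}$, so the monotonicity part of the Perron--Frobenius theorem (Theorem~\ref{thm_PFtheorem}) gives $\lambda(\phi_k)=\lambda(M_k)\ge k\,\lambda(R)$, whence $\mathrm{ent}(\phi_k)=\log\lambda(\phi_k)\ge\log\!\bigl(k\,\lambda(R)\bigr)\to\infty$. Combined with the volume bound, $\mathrm{ent}(\phi_k)/\mathrm{vol}(\phi_k)\ge\log\!\bigl(k\,\lambda(R)\bigr)/V_0\to\infty$, which is the assertion.

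I expect the main obstacle to be the entropy estimate. One must secure a \emph{single} train track carried to itself by every $\phi_k$, so that the transition matrices genuinely form a one--parameter affine family $Q+kR$ on a fixed index set, and one must then guarantee $\lambda(R)>0$, i.e.\ that the $k$--fold twisting about $\gamma$ really increases complexity rather than being absorbed by the remaining twists; Penner's construction is well suited to producing such a track, which is why it is convenient here. A secondary and more routine point is to exhibit an explicit filling curve system — equivalently, an explicit braid $\psi_0$ and encircling unknot $C$ — for which $N$ is hyperbolic; decomposing that one fixed manifold $N$ into finitely many ideal octahedra then even yields an explicit numerical upper bound valid for all the $\mathrm{vol}(\phi_k)$.
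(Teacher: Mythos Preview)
Your approach is essentially the paper's: both build $\phi_k=t_{a_1}^{\,k}\phi$ from Penner's semigroup, bound the volumes by recognising all $\Bbb{T}(\phi_k)$ as Dehn fillings of one fixed drilled manifold, and establish entropy growth via Perron--Frobenius monotonicity on a common train track.

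The paper's execution is cleaner at exactly the two places you flagged as obstacles. For the volume, it avoids checking that the drilled manifold $W$ is hyperbolic by invoking instead the Gromov--norm bound $\mathrm{vol}(\Bbb{T}(\phi_k))\le v_3\,\|[W,\partial W]\|$ from Thurston's notes, which is valid irrespective of the geometry of $W$. For the entropy, the paper never isolates your matrix $R$ or asks whether $\lambda(R)>0$: it simply observes that $N_{k'}M\ge N_kM$ with $N_{k'}M\ne N_kM$ whenever $k'>k$, so the equality clause in Theorem~\ref{thm_PFtheorem} forces $\lambda(\phi_{k'})>\lambda(\phi_k)$ strictly, and then the discreteness of $\mathrm{Dil}(\Sigma)$ turns a strictly increasing sequence of dilatations into one tending to infinity. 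Your route buys a quantitative rate $\lambda(\phi_k)\ge k\,\lambda(R)$ once $\lambda(R)>0$ is secured; the paper's is purely qualitative but needs no such verification. As a side remark, your opening reduction to a punctured disk is unnecessary and, for genus $g\ge 1$, unavailable; the Penner and drilling arguments already work on any $\Sigma$.
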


We first review a construction of pseudo-Anosov mapping classes 
by Penner \cite{Penner} and how to compute their  dilatation. 
We follow the notation in \cite{Leininger}.  
Let $\mathcal{S}(\Sigma)  $ be the set of isotopy classes of essential simple closed curves on $\Sigma$. 
Let $\mathcal{S}'(\Sigma)$ denote the set of isotopy classes of essential, closed $1$-manifolds embedded on $\Sigma$. 
We refer to the components of $A \in \mathcal{S}'(\Sigma)$ as elements of $\mathcal{S}(\Sigma)$ and we write 
$A= a_1 \cup \cdots \cup a_m $ for $a_i \in \mathcal{S}(\Sigma)$.  
For simplicity we identify elements of $\mathcal{S}'(\Sigma)$ with $1$-manifolds on $\Sigma$, and 
we assume that elements $A,B \in \mathcal{S}'(\Sigma)$ meet transversely with minimal intersection number. 
We say that $A \cup B$ {\it fills} $\Sigma$ if each component of $\Sigma \setminus (A \cup B)$ is topologically a disk  or a disk with a puncture. 

For $A = a_1  \cup \cdots \cup a_m \in  \mathcal{S'}(\Sigma)$ and $B= b_1 \cup \cdots \cup b_n \in  \mathcal{S'}(\Sigma)$, 
suppose that $A \cup B$ fills $\Sigma$. 
Let $\mathcal{R}(A,B)$ be the free semigroup of $\mathcal{M}(\Sigma)$ generated by the Dehn twists 
$$\{t_{a_i}^{+1}\ |\ a_i \in A\} \cup \{t_{b_j}^{-1}\ |\ b_j \in B\}.$$ 
Penner  shows that $\phi \in \mathcal{R}(A,B)$ is pseudo-Anosov if each $t_{a_i}^{+1}$ for $i \in \{ 1, \cdots, m\}$ and each 
$t_{b_j}^{-1}$ for $j \in \{1, \cdots, n\}$ occurs at least once in $\phi$ \cite{Penner}.

We now construct a (bigon) train track $\tau$ obtained from $A \cup B$ by a deformation of each intersection $a_i \cap b_j$ 
as in Figure~\ref{fig_train_track_Penner}.

\begin{figure}[htbp]
\begin{center}
\includegraphics[width=2.5in]{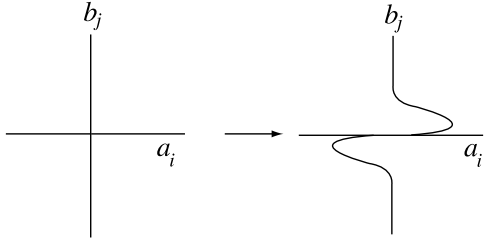}
\caption{train track obtained from a deformation.} 
\label{fig_train_track_Penner}
\end{center}
\end{figure} 

In a neighborhood $N(c)$ of  a simple closed curve $c \in A \cup B$, 
we take a push-off $c' $ of $c$ in either side  in $N(c)$ (Figure~\ref{fig_push-off-traintrack}(left)).  
We represent $\phi$ using $t_{a'_i}$ and $t_{b'_j}^{-1}$ rather than $t_{a_i}$ and $t_{b_j}^{-1}$. 
Note that  $\tau$ carries each $t_{a'_i}(\tau)$ (Figure~\ref{fig_push-off-traintrack}) and  each $t_{b'_j}^{-1}(\tau)$. 
Thus $\tau$ carries each $\phi(\tau)$  for each $\phi \in  \mathcal{R}(A,B)$. 
This implies that each $\phi $ induces a graph map $\phi_*: \tau \rightarrow \tau$. 
The non-negative integral matrix $M= (m_{tu})$, called the {\it incident matrix} for $\phi_*$, can be defined as follows:  
each $m_{tu}$ equals the number of the times  the image of the $u^{\mathrm{th}}$ edge of $\tau$ under $\phi_*$ passes through the $t^{\mathrm{th}}$ edge. 
Then $\lambda(\phi)$ for a pseudo-Anosov mapping class $\phi \in  \mathcal{R}(A,B)$ is equal to  the spectral radius of $M$ \cite{Penner}. 
(Here $M$ is in fact  a Perron-Frobenius matrix, and hence $\lambda(\phi)$ equals the largest eigenvalue of $M$ strictly greater than $1$.) 
\medskip

\begin{figure}[htbp]
\begin{center}
\includegraphics[width=3.3in]{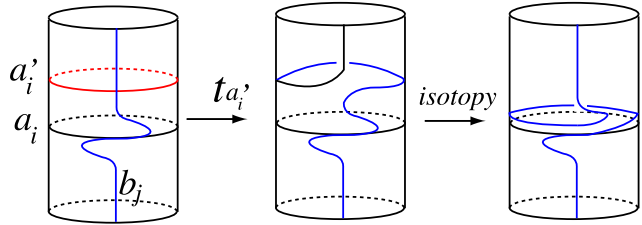}
\caption{image of $\tau$ under $t_{a_i'}$. } 
\label{fig_push-off-traintrack}
\end{center}
\end{figure} 

\begin{proof}[Proof of Theorem~\ref{thm_entdiverge}]
For the proof, it suffices to show that  there exists a  sequence 
of $\phi_k \in \mathcal{M}^{\mathrm{pA}}(\Sigma)$ 
such that $\lim_{k \to \infty} \mathrm{ent}(\phi_k) = \infty$ and $\mathrm{vol}(\phi_k) $ 
remains bounded above. 
Let $\phi = t_{a_1} t_{a_2} \cdots t_{a_m} t_{b_1}^{-1} t_{b_2}^{-1} \cdots t_{b_n}^{-1}$ and consider a family of pseudo-Anosov mapping classes 
$\phi_k  = t_{a_1}^k \phi$ for each $k >0$. 
We show that this is a desired family. 
Let $\hat{a}$ denote the knot $a_1 \times \{1/2\}$ in ${\Bbb T}(\phi)$ and 
let $W$ denote the closure of ${\Bbb T}(\phi) \setminus N(\hat{a})$, where 
$N(\hat{a})$ is a regular neighborhood of $\hat{a}$. 
Then $W(m/n)$ denotes the manifold obtained 
from ${\Bbb T}(\phi)$ by an $(m,n)$-Dehn filling on $W$, 
relative to a suitable choice of the longitude in $\partial N(\hat{a})$. 
We notice that  $W(1/k)$  is homeomorphic to ${\Bbb T}(t_{a_1}^k \phi)$. 
Hence by \cite[Proposition~6.5.2]{Thurston}, we have 
\begin{equation*} 
	\frac{\mathrm{vol}({\Bbb T}(t_{a_1}^k \phi))}{v_3} \le \| [W, \partial W] \|,
\end{equation*} 
where 
$ [W, \partial W] $ is the relative fundamental class, 
$\| \cdot \|$ is  the Gromov norm for a homology class $\cdot \in  H_*(W, \partial W)$. 
The volume of $\phi_k $ is thus bounded above. 

The proof is completed once  one shows that 
$\lambda(\phi_{k'}) > \lambda(\phi_{k})$ for $k' > k$,  
since the set $\mathrm{Dil}(\Sigma)$ is discrete. 
Consider a vertex $v$ of $\tau$. 
Let $\beta_t$ be an edge of $\tau$ emanating from $v$ such that $\beta_t$ is 
contained in some $b_j$ and 
$\beta_t$ intersects with the push-off $a_1'$. 
Let $\alpha_u$ be any edge of $\tau$ contained in $a_1$ (Figure~\ref{fig_vertex_nbhd}). 
The incident matrix for the mapping class $t_{a_1'}^k$ is of the form 
\begin{equation*} 
	N_k = I + P_k,
\end{equation*}
where $I$ is the identity matrix and $P_k$ is a non-negative integral matrix such that the 
$(u,t)$ entry equals $k$. 
If $k' > k$ we have $N_{k'} > N_k$, and in particular $N_{k'}  M > N_k M$, 
where $M$ is the incident matrix for $\phi$. 
Note that  $N_k M$ is the incident matrix for $\phi_k =t_{a_1}^k \phi$, and 
we have $\lambda(\phi_{k'}) > \lambda(\phi_k)$ since $N_{k'} M > N_k M$. 
\end{proof}

\begin{figure}[htbp]
\begin{center}
\includegraphics[width=2.3in]{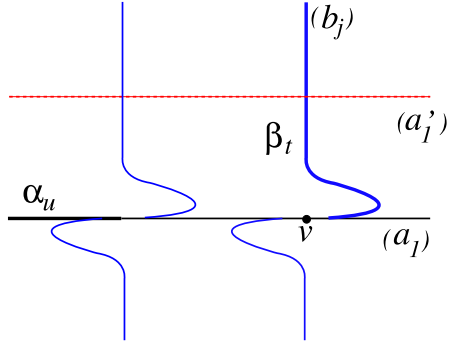}
\caption{$\beta_t$ and $\alpha_u$.} 
\label{fig_vertex_nbhd}
\end{center}
\end{figure}

%
%

\section{Experiments and Observations}\label{section_EandO}


\subsection{Experiments} 
\label{subsection_experiments}

For a computation of  the braid dilatation, we use a program by T.~Hall \cite{Hall}. 
For a computation of  the volume of links in the 3-sphere $S^3$, 
we use the program ``SnapPea'' by J.~Weeks \cite {Weeks}.
Here we exhibit the computation for $\Sigma \in \{D_3,D_4,D_5,D_6 \}$.  

We have 
\begin{equation*} 
	(\mathrm{vol}(\phi^m), \mathrm{ent}(\phi^m)) 
	= (m \, \mathrm{vol}(\phi), m \, \mathrm{ent}(\phi))
\end{equation*}  
and hence for any mapping class $\phi \in  \mathcal{M}^{\mathrm{pA}}(\Sigma) $, 
the line in ${\Bbb R}^+ \times {\Bbb R}^+$ of the slope 
$\mathrm{ent}(\phi)/\mathrm{vol}(\phi)$ passing through 
the origin must intersect $\mathcal{E}(\Sigma) $ in infinitely many points. 
Let 
\begin{equation*} 
	\mathcal{E}_k(\Sigma) = \{(\mathrm{vol}(\phi), \mathrm{ent}(\phi)) 
	\ |\ \phi \in \mathcal{M}^{\mathrm{pA}}(\Sigma) \ \mbox{up\ to\ word\ length\ }k\}. 
\end{equation*} 
The following is the plots of $\mathcal{E}_{15}(D_3)$, 
$\mathcal{E}_{12}(D_4)$, $\mathcal{E}_{10}(D_5)$ and
$\mathcal{E}_{11}(D_6)$
(Figures \ref{fig_br3} -- \ref{fig_br6}).

\begin{figure}[htbp]
\begin{center}
\includegraphics[height=0.9\textwidth, angle=-90]{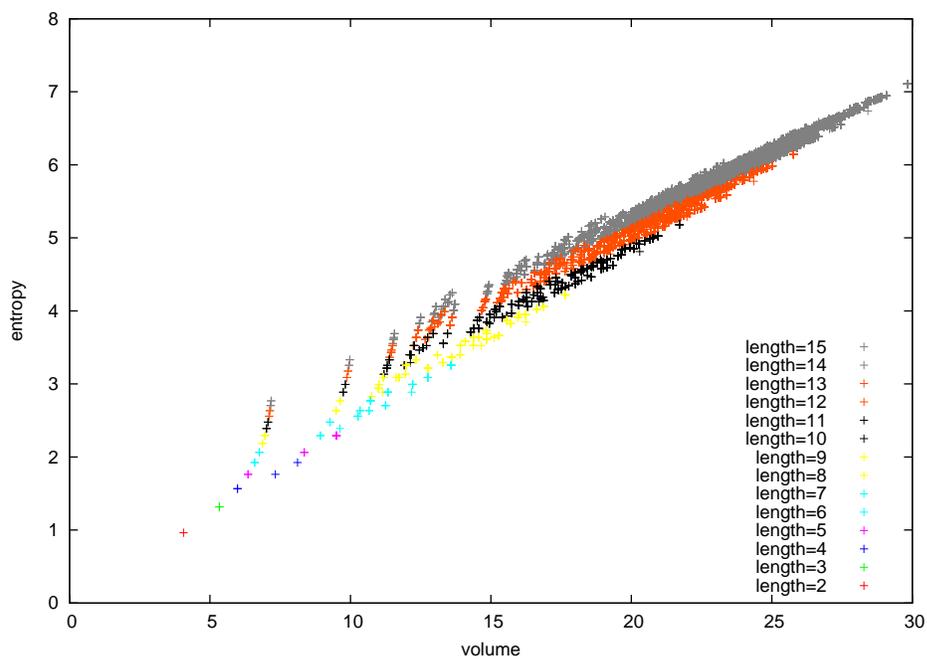}
\caption{$\mathcal{E}_{15}(D_3)$.} 
\label{fig_br3}
\end{center}
\end{figure} 

\begin{figure}[htbp]
\begin{center} 
\includegraphics[height=0.9\textwidth, angle=-90]{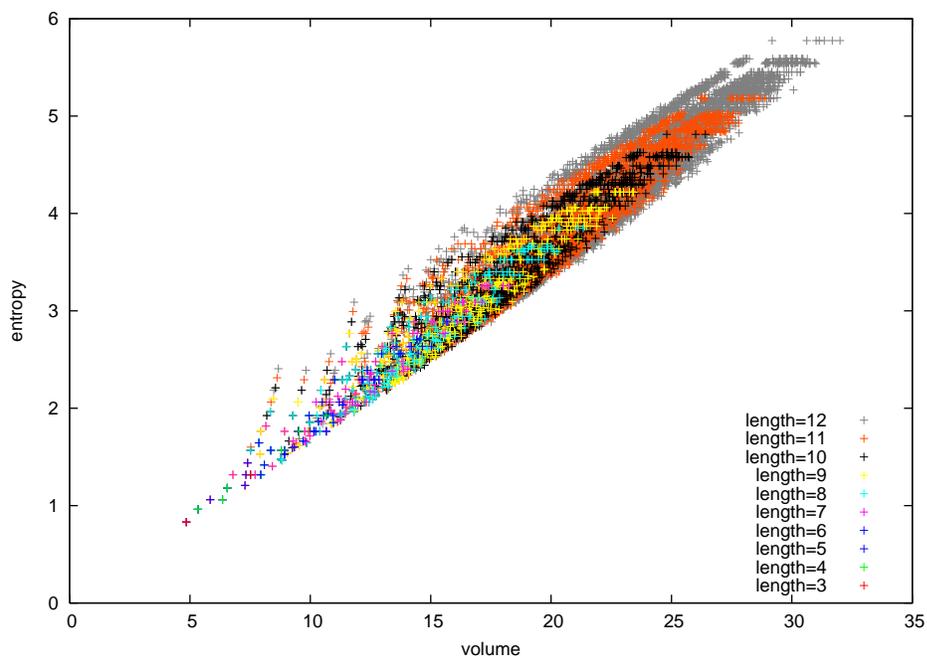}
\caption{$\mathcal{E}_{12}(D_4)$.} 
\label{fig_br4}
\end{center}
\end{figure}

\begin{figure}[htbp]
\begin{center}
\includegraphics[height=0.9\textwidth, angle=-90]{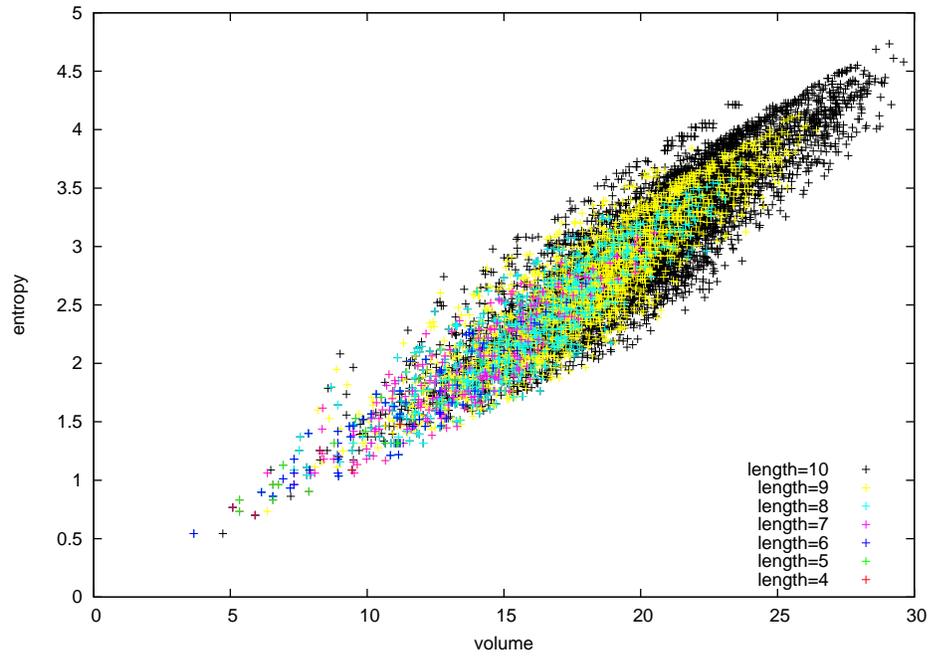}
\caption{$\mathcal{E}_{10}(D_5)$.} 
\label{fig_br5}
\end{center}
\end{figure}

\begin{figure}[htbp]
\begin{center}
\includegraphics[height=0.9\textwidth, angle=-90]{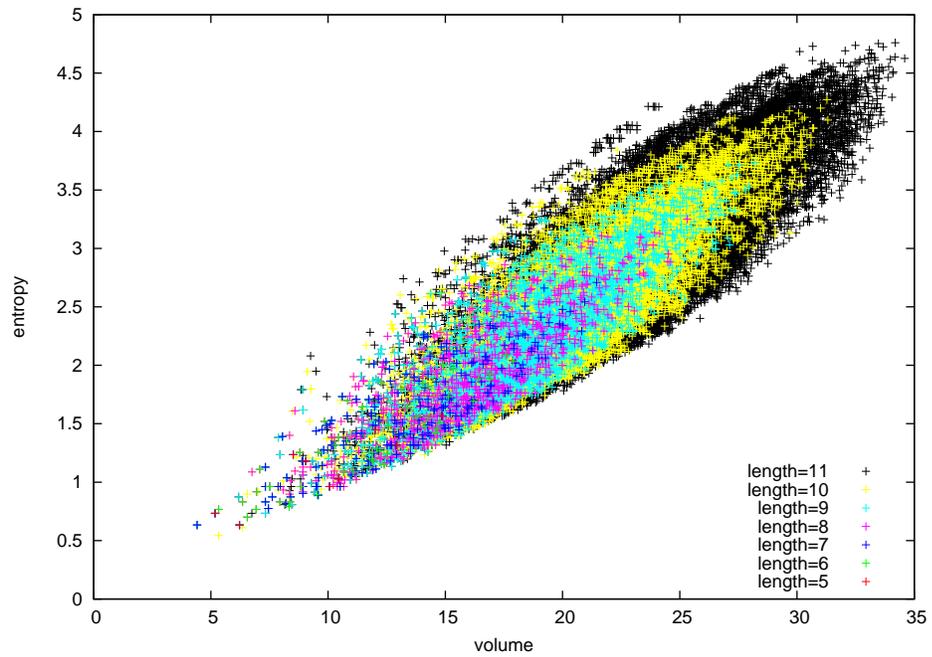}
\caption{$\mathcal{E}_{11}(D_6)$.} 
\label{fig_br6}
\end{center}
\end{figure}


\subsection{Observations}\label{subsection_observations}

Recall that $\lambda(\Sigma)$ is the minimal dilatation among $\lambda(\phi)$ for $\phi \in \mathcal{M}^{\mathrm{pA}}(\Sigma)$. 
We introduce the following notation: 
\begin{eqnarray*}
	\lambda_k(\Sigma)
	&=& \min\{\lambda(\phi) \ |\ 
	\phi \in \mathcal{M}^{\mathrm pA}(\Sigma)\ \mbox{up\ to\ word\ length\ }k\}, \\
	\lambda(\Sigma;c) 
	&=& \min\{\lambda(\phi) \ |\  
	\phi \in \mathcal{M}^{\mathrm pA}(\Sigma),\ 
	{\Bbb T}(\phi)\ \mbox{has\ }c\mbox{\ cusps}\}, \\
	\lambda_k(\Sigma;c) 
	&=& \min\{\lambda(\phi) \ |\ 
	\phi \in \mathcal{M}^{\mathrm pA}(\Sigma)\ \mbox{up\ to\ word\ length\ }k,
	\  {\Bbb T}(\phi)\ \mbox{has\ }c\mbox{\ cusps}\}. 
\end{eqnarray*}
In the case of $\Sigma= D_n$,  the number of the cusps of the mapping torus ${\Bbb T}(b)$ of 
$b \in \mathcal{M}^{\mathrm pA}(D_n) < \mathcal{M}^{\mathrm pA}(\Sigma_{0,n+1})$ 
equals the number of the components of the link $\overline{b}$, 
since ${\Bbb T}(b)= S^3 \setminus \overline{b}$.

The minimal dilatation $\lambda(\Sigma)$, and  the minimal entropy $\mathrm{ent}(\Sigma)$, 
are known for the following surfaces.

\label{equation_mini-dil}
\begin{center}
\begin{tabular}{|  l   |c| c | c | c| c|} \hline
    $\Sigma$                        &  $\lambda(\Sigma)$  & $\mathrm{ent}(\Sigma) $     &mapping  class realizing $\lambda(\Sigma)$  & reference \\ \hline
    $\Sigma_{1,1}$        & $\approx  2.61803$     & $\approx 0.96242$    &$t_a t_b^{-1}$ &  folklore  \\
   $D_3$                     &   $\approx  2.61803$    & $\approx 0.96242$      & $\beta_3:= \sigma_1 \sigma_2^{-1}$ & Matsuoka \cite{Matsuoka} \\
   $D_4$                     &  $ \approx  2.29663$    & $\approx 0.83144$      & $\beta_4:=\sigma_1 \sigma_2 \sigma_{3}^{-1}$ & Ko-Los-Song \cite{KLS} \\
   $D_5$                     &   $\approx 1.72208 $   & $\approx 0.54353$      &$ \beta_5:=\sigma_1^3 \sigma_2 \sigma_3 \sigma_4$ &Ham-Song \cite{HS}  \\ 
   $\Sigma_{2,0}$                     &   $\approx 1.72208 $   &
	 $\approx 0.54353$      &
$ \tilde{\beta_5}:= t_a^3 t_b t_c t_d$
 &Cho-Ham \cite{CH}  \\ \hline
\end{tabular}
\end{center}

We now turn to the volume. 
The set 
$$\{v >0\ |\ v\ \mbox{is\ the\ volume\ of\ a\ hyperbolic\ }3\mbox{-manifold}\},$$ 
called the {\it volume spectrum}, is a well-ordered closed subset 
of the set of real numbers ${\Bbb R}$ of order type $\omega^{\omega}$. 
In particular any subset of the volume spectrum achieves its infimum. 
We set 
\begin{eqnarray*}
	\mathrm{vol}(\Sigma) 
	&=& \min \{\mathrm{vol}(\phi)\ |\ 
	\phi \in \mathcal{M}^{\mathrm{pA}}(\Sigma)\}, \\
	\mathrm{vol}_k(\Sigma)
	&=& \min\{\mathrm{vol}(\phi)\ |\  
	\phi \in \mathcal{M}^{\mathrm{pA}}(\Sigma)\ \mbox{up\ to\ word\ length\ }k \}, \\
	\mathrm{vol}(\Sigma;c) 
	&=& \min\{\mathrm{vol}(\phi)\ |\  \phi \in \mathcal{M}^{\mathrm{pA}}(\Sigma),\  
	{\Bbb T}(\phi)\ \mbox{has\ }c\mbox{\ cusps}\},\  \mbox{and}\\
	\mathrm{vol}_k(\Sigma;c) 
	&=& \min\{\mathrm{vol}(\phi)\ |\  \phi \in
	\mathcal{M}^{\mathrm{pA}}(\Sigma)\ \mbox{up\ to\ word\ length\
	}k, \ 
	 {\Bbb T}(\phi)\ \mbox{has\ }c\mbox{\ cusps}\}. 
\end{eqnarray*}

A question is which mapping class
reaches $\lambda(\Sigma)$, 
and 
which one reaches $\mathrm{vol}(\Sigma)$. 
For the case $\Sigma \in \{D_3, D_5\}$, 
there exists a mapping class
simultaneously reaching both $\lambda(\Sigma)$ and
$\mathrm{vol}(\Sigma)$.
The $3$-braid $\beta_3$ with minimal dilatation realizes 
$\mathrm{vol}(D_3)$ (F.~Gu\'eritaud and D.~Futer \cite[Theorem~B.1]{GF}). 
For the $5$-braid $\beta_5$ with minimal dilatation, 
the link $\overline{\beta_5}$ equals the $(-2,3,8)$-pretzel link (Figure~\ref{fig_pretzel}). 
It is shown that the $(-2,3,8)$-pretzel link complement and 
the Whitehead link complement  have the minimal volume 
among orientable $2$-cusped hyperbolic $3$-manifolds (Agol \cite{Agol2}). 
Hence $\beta_5$ also realizes $\mathrm{vol}(D_5)\approx 3.66339$. 
\begin{figure}[htbp]
\begin{minipage}{0.5\textwidth}
\begin{center}
\includegraphics[width=2.5in]{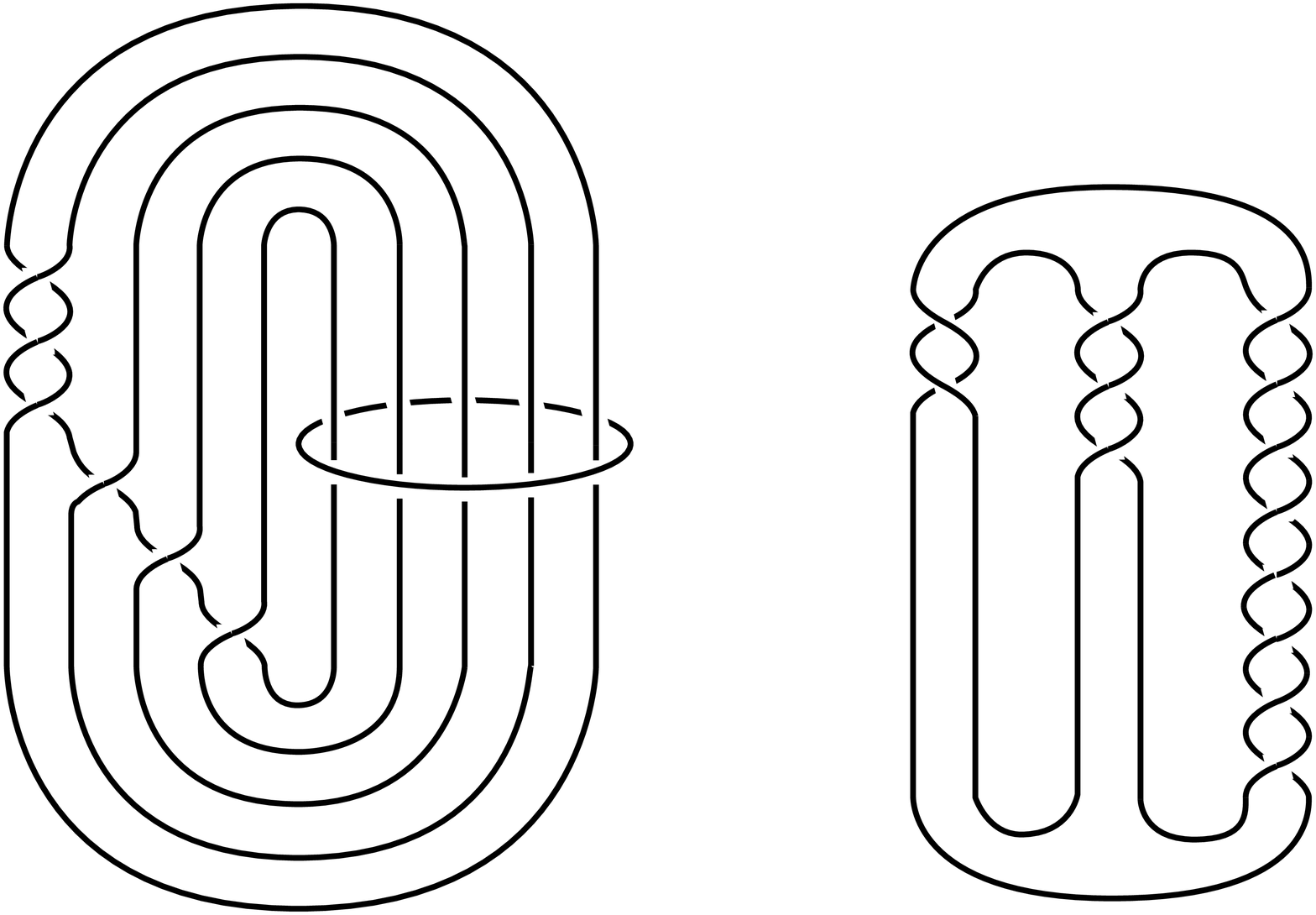}
\caption{link $\overline{\beta_{5}}$ (left) is equal to  $(-2,3,8)$-pretzel link (right).} 
\label{fig_pretzel}
\end{center}
\end{minipage}
\hfill
\begin{minipage}{0.48\textwidth}
\begin{center}
\includegraphics[width=1.6in]{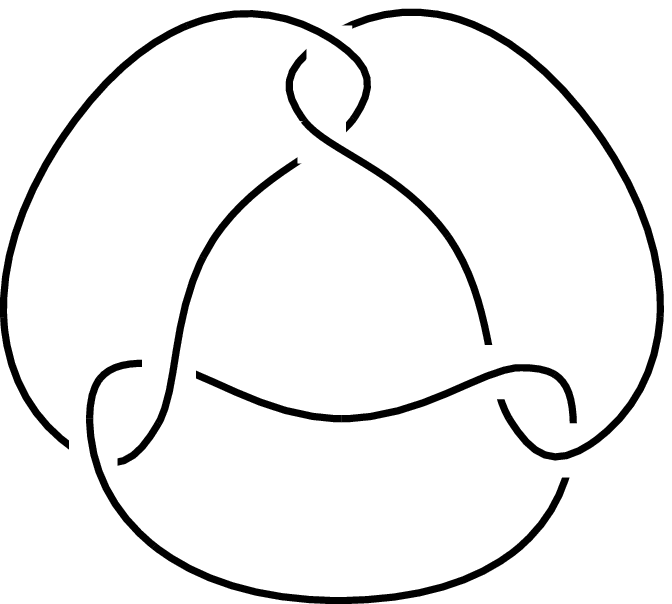}
\caption{chain-link with $3$ components.} 
\label{fig_chain_link_rev}
\end{center}
\end{minipage}
\end{figure} 
One may ask that whether there exists a mapping class simultaneously reaching
both $\lambda(\Sigma)$ and $\mathrm{vol}(\Sigma)$.
This question seems to be false in general.
In our experiment, for the case $D_6$,
$\mathrm{vol}_{11}(D_6)$ and 
$\lambda_{11}(D_6)$ are
not reached by the same mapping class.
It may be caused by the fact that 
the mapping torus reaching $\mathrm{vol}_{11}(D_6)$ and 
the one reaching $\lambda_{11}(D_6)$
have different number of cusps.
It would be natural to revise the question by:

\begin{ques}
Does there exist a mapping class of $\mathcal{M}^{\mathrm{pA}}(\Sigma)$ simultaneously reaching both 
$\lambda(\Sigma;c)$ and $\mathrm{vol}(\Sigma;c)$?
\end{ques}

\noindent
The computation together with theoretical results shows this is likely to be positive.
More precisely: 

\begin{itemize}
\item[{\bf 1-a.}]
The $3$-braid $\beta_3$ reaches 
both $\lambda(D_3) \approx 2.61803$ and
$\mathrm{vol}(D_3) \approx 4.05976$.
Thus
$\lambda(D_3) = \lambda(D_3; 2)$ and
$\mathrm{vol}(D_3) = \mathrm{vol}(D_3; 2)$.
\item[{\bf 1-b.}]
	It is easy to verify that the $3$-braid $\sigma_1^2 \sigma_2^{-1}$ 
	reaches $\lambda(D_3;3)$. This 3-braid also reaches $\mathrm{vol}_{15}(D_3;3) =
	 \mathrm{vol}(S^3 \setminus C_3) \approx 5.33348$, 
	where $C_3$ is the chain-link with $3$ components (Figure~\ref{fig_chain_link_rev}). 
	Among orientable $3$-cusped hyperbolic $3$-manifolds,  $S^3 \setminus C_3$, 
	which is called the magic manifold, is the one with the smallest known volume. 
\item[{\bf 2-a.}]
	The $4$-braid $\beta_4$ reaches both $\lambda(D_4)=
	     \lambda(D_4;2) \approx 2.29663 $ and   
	$\mathrm{vol}_{12}(D_4)\approx  4.85117 $. 
\item[{\bf 2-b.}]
	The $4$-braid $\sigma_1^2 \sigma_2 \sigma_3^{-1}$ reaches 
	both $\lambda_{12}(D_4;3) \approx 2.61803$ and 
	$\mathrm{vol}_{12}(D_4;3)= \mathrm{vol}(S^3 \setminus C_3)$. 
\item[{\bf 3-a.}]
	The $5$-braid $\beta_5$
	reaches both $\lambda(D_5) = \lambda(D_5;2) \approx 1.72208$ and   
	$\mathrm{vol}(D_5) = \mathrm{vol}(D_5;2) \approx 3.66386$. 
\item[{\bf 3-b.}]
	The $5$-braid $\sigma_1 \sigma_2^2 \sigma_3 \sigma_4 $ reaches 
	both $\lambda_{10}(D_5;3) \approx 2.08102$ and 
	$\mathrm{vol}_{10}(D_5;3) = \mathrm{vol}(S^3 \setminus C_3)$. 
\item[{\bf 4-a.}]
	The $6$-braid 
	$\sigma_1^3\sigma_2 \sigma_3 \sigma_4 \sigma_5 $  reaches both 
	$\lambda_{11}(D_6;2) \approx 1.8832$ and $  \mathrm{vol}_{11}(D_6;2) \approx 4.41533 $. 
\item[{\bf 4-b.}] 
	The $6$-braid 
 	$ \sigma_1^3 \sigma_2 \sigma_1^2 \sigma_3 \sigma_2 \sigma_4
 	\sigma_5 $ 
	reaches both  $\lambda_{11}(D_6;3) = \lambda(\beta_5) $ and  
	$\mathrm{vol}_{11}(D_6;3) = \mathrm{vol}(S^3 \setminus C_3)$. 
\end{itemize}


These observations are straightforward from
the following plots of $\mathcal{E}_{15}(D_3)$, 
$\mathcal{E}_{12}(D_4)$, $\mathcal{E}_{10}(D_5)$ and
$\mathcal{E}_{11}(D_6)$, restricted to
the range of the volume $< 5.334$
(Figure \ref{fig_smallvolume}).

We can check that in the plots 
all the 3-cusped mapping tori have the same volume $ \approx 5.33348 $ and 
other mapping tori have 2 cusps and smaller volumes than the one with 3 cusps.

\begin{figure}[htbp]
\scalebox{0.5}{\input{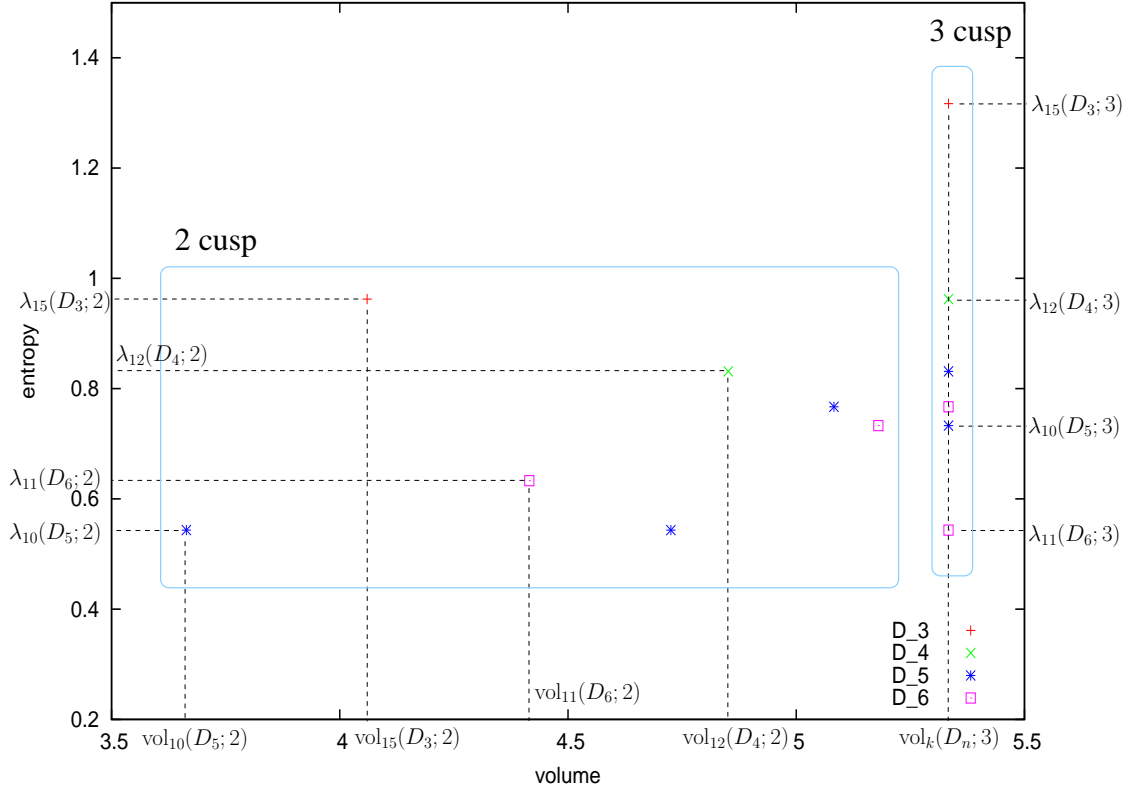}}
\caption{$\mathcal{E}_{15}(D_3)$, 
$\mathcal{E}_{12}(D_4)$, $\mathcal{E}_{10}(D_5)$ and
$\mathcal{E}_{11}(D_6)$.}
\label{fig_smallvolume}
\end{figure}


\begin{rem}
In the experiment,
all braids in (1-b), (2-b), (3-b) and (4-b) have the same volume
 $ \approx 5.33348 \approx \mathrm{vol}(S^3 \setminus C_3)  $.
Actually we can verify these mapping tori are homeomorphic to $S^3
 \setminus C_3$, see \cite{KT}.
\end{rem}

\begin{rem}
\noindent
	The $5$-braids $\beta_5$ and 
	$\beta_5'= \sigma_1^4 \sigma_2 \sigma_3 \sigma_1 \sigma_2 \sigma_3 \sigma_4$ 
	both realize the minimal dilatation $\lambda(D_5)$, but the inequality 
	$\mathrm{vol}(D_5) = \mathrm{vol}(\beta_5) < \mathrm{vol}(\beta_5')$ holds. 
	This example says that it is not true that the mapping class with minimal dilatation 
	realizes the minimal volume. 
\end{rem}

In Theorem \ref{thm_Bi-Lipschitz}
it is shown that there exists a constant
$B = B(\Sigma)$ such that $B \ {\rm vol}(\phi) \leq {\rm ent}(\phi)$ holds.
However it is not quite obvious to find accurate value of $B$.
We set 
\begin{eqnarray*} 
\mathcal{I}(\Sigma) &=& \inf
 \Big\{\frac{\mathrm{ent}(\phi)}{\mathrm{vol}(\phi)}\ \Big|\ \phi \in
 \mathcal{M}^{\mathrm{pA}}(\Sigma) \Big\}\  \mbox{and} \\
\mathcal{I}_k(\Sigma) &=& \min
\Big\{\frac{\mathrm{ent}(\phi)}{\mathrm{vol}(\phi)}\ \Big|\ \phi \in
\mathcal{M}^{\mathrm{pA}}(\Sigma)  \ \mbox{up\  to\  word\  length\ } k \Big\}.
\end{eqnarray*}


\begin{figure}[htbp]
\begin{center}
\includegraphics[height=\textwidth, angle=-90]{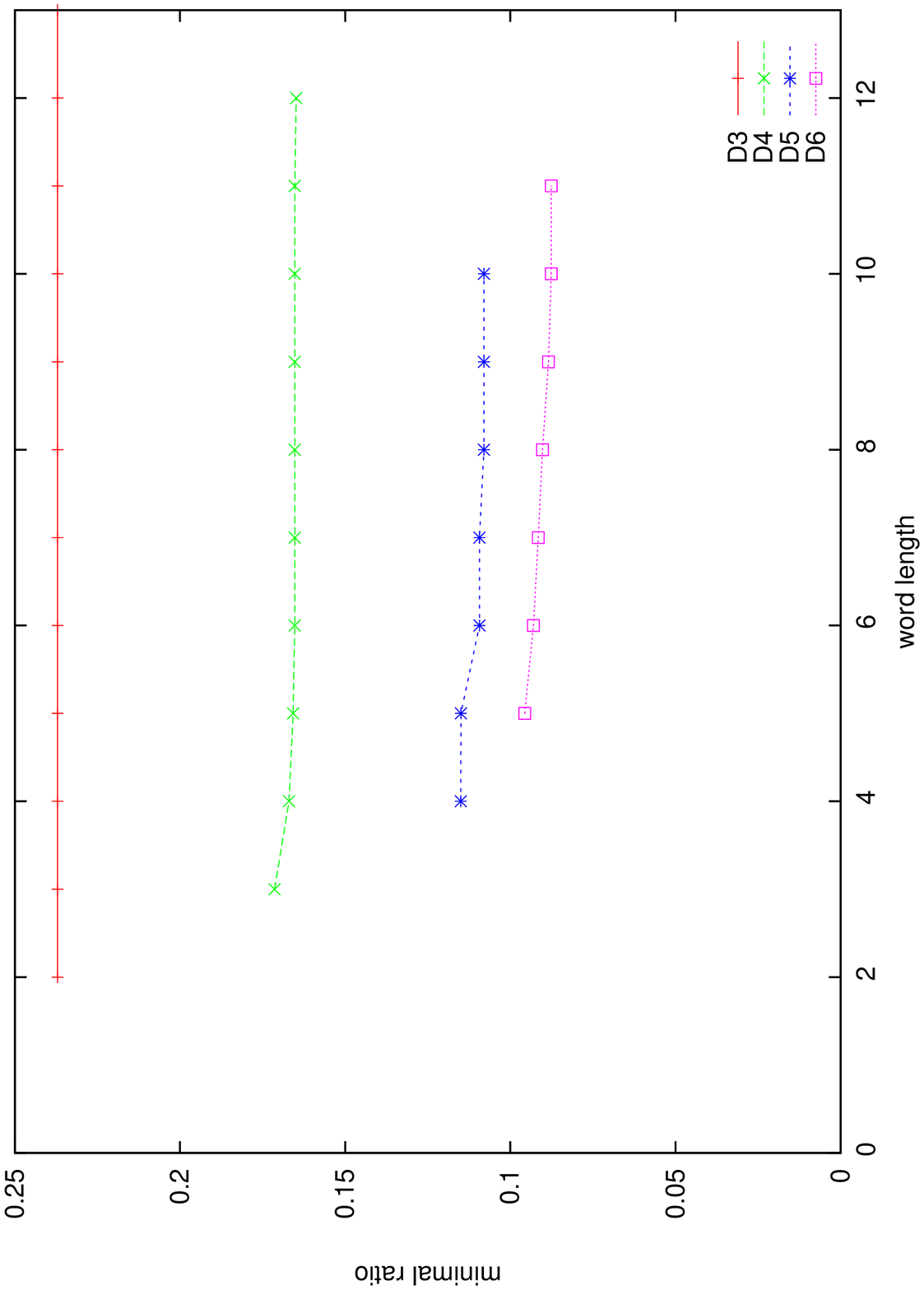}
\caption{minimal ratio up to some word length for $\Sigma \in \{D_3,D_4, D_5, D_6\}$.}
\label{fig_ratio}
\end{center}
\end{figure}

\noindent
It is natural to ask:
\begin{ques}
\label{ques_attain}
Does  there exist a mapping class  $\phi \in  \mathcal{M}^{\mathrm{pA}}(\Sigma)$ which attains
$\mathcal{I}(\Sigma)$ ? 
\end{ques}

To study Question~\ref{ques_attain}, we compute ${\mathcal I}_k(\Sigma)$
for 
$\Sigma \in \{D_3,D_4,D_5,D_6\}$ (Figure~\ref{fig_ratio}). 
We see that ${\mathcal I}_k(D_3)$  is achieved by the mapping class $\sigma_1 \sigma_2^{-1}$ up to $k=15$. 
On the other hand for any other surfaces,  
${\mathcal I}_k(\Sigma)$ decreases  as $k$ increases. 
We will study
${\mathcal I}(D_3)$ and ${\mathcal I}(\Sigma_{1,1})$ 
in the next section.


\begin{ques}
Is it true that $ \mathcal{I}(D_n)>  \mathcal{I}(D_{n+1})$ for all $n \ge 3$?
Is it true that $ \mathcal{I}(\Sigma_{g,0})>  \mathcal{I}(\Sigma_{g+1,0})$ for all $g \ge 2$? 
\end{ques}

%
%

\section{A lower bound of $\mathcal{I}(\Sigma_{1,1})$}
\label{section_a-lower-bound}

Let $a$ and $b$ be  the meridian and the longitude of a once-punctured torus. 
We set $L= t_a$ and $R = t_b^{-1}$, noting that $\{a, b\}$ fills $\Sigma_{1,1}$. 
We first recall the following well known result: 

\begin{lem}
\label{thm_canonical-form}
Each $\phi \in \mathcal{M}^{\mathrm{pA}}(\Sigma_{1,1})$ is conjugate to a mapping class 
\begin{eqnarray}
\label{expression_conjugacy-class}
L^{m_1} R^{n_1} \cdots L^{m_{\ell}} R^{n_{\ell}} \in \mathcal{R}(\{a\}, \{b\})
\end{eqnarray}
where $\ell$, $m_i$ and $n_i$ are positive integers, 
and the mapping class $L^{m_1} R^{n_1} \cdots L^{m_{\ell}} R^{n_{\ell}}$ 
is unique up to cyclic permutations. 
Conversely, every  mapping class of the form 
{\em (\ref{expression_conjugacy-class})} is pseudo-Anosov. 
\end{lem}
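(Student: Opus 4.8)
The plan is to prove the statement in two directions: first that every pseudo-Anosov mapping class on $\Sigma_{1,1}$ is conjugate to a word of the form \eqref{expression_conjugacy-class}, and second that every such word is pseudo-Anosov. The converse direction is the easy one: since $\{a,b\}$ fills $\Sigma_{1,1}$, Penner's criterion (quoted just before this section, in \cite{Penner}) applies to the free semigroup $\mathcal{R}(\{a\},\{b\})$ generated by $L = t_a$ and $R = t_b^{-1}$, and any element in which both $L$ and $R$ occur at least once is pseudo-Anosov. A word $L^{m_1}R^{n_1}\cdots L^{m_\ell}R^{n_\ell}$ with all $m_i, n_i \geq 1$ manifestly has this property, so this direction is immediate.

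For the forward direction, the clean approach is to pass through the standard identification $\mathcal{M}(\Sigma_{1,1}) \cong \mathrm{SL}(2,\mathbb{Z})$ (or $\mathrm{PSL}(2,\mathbb{Z})$, depending on the basepoint convention), under which $L = t_a$ and $R = t_b^{-1}$ go to the parabolic matrices $\left(\begin{smallmatrix}1&1\\0&1\end{smallmatrix}\right)$ and $\left(\begin{smallmatrix}1&0\\1&1\end{smallmatrix}\right)$, which generate $\mathrm{SL}(2,\mathbb{Z})$ as a free monoid on this pair together with their inverses. A mapping class is pseudo-Anosov exactly when the corresponding matrix is hyperbolic, i.e.\ has $|\mathrm{trace}| > 2$. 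The key algebraic fact is that every hyperbolic element of $\mathrm{SL}(2,\mathbb{Z})$ is conjugate to a product of positive powers $\left(\begin{smallmatrix}1&m\\0&1\end{smallmatrix}\right)$ and $\left(\begin{smallmatrix}1&0\\n&1\end{smallmatrix}\right)$ with $m,n\geq 1$ alternating, and this continued-fraction normal form is unique up to cyclic permutation. I would cite this; it is classical (it is the theory of the "minus" continued fraction expansion of the fixed points of a hyperbolic matrix, equivalently the periodic cutting sequence of its axis through the Farey tessellation). Translating back, $\phi$ is conjugate in $\mathcal{M}(\Sigma_{1,1})$ to $L^{m_1}R^{n_1}\cdots L^{m_\ell}R^{n_\ell}$, and the cyclic-word uniqueness transfers verbatim. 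One must also observe that $\ell\geq 1$ and at least one $L$ and one $R$ actually appear, which is automatic from $|\mathrm{trace}|>2$ since purely positive powers of a single parabolic are themselves parabolic.

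The main obstacle, or rather the main point requiring care, is the uniqueness clause and the precise bookkeeping at the boundary between the two descriptions: one needs that the conjugacy classes of hyperbolic matrices biject with cyclic alternating words in the two positive parabolics, with no collapsing (e.g.\ ruling out that a word and a cyclic rotation of a different-looking word give conjugate elements). This is exactly the statement that the cutting sequence of the geodesic axis in the Farey graph is well-defined up to the choice of starting edge, so I would phrase the argument in those geometric terms, or alternatively invoke the standard reference for the structure of conjugacy classes in $\mathrm{PSL}(2,\mathbb{Z})$ (the free product $\mathbb{Z}/2 * \mathbb{Z}/3$, whose hyperbolic conjugacy classes correspond to cyclically reduced words, matched against the $S,T$ versus $L,R$ generating sets). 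Since the lemma is labeled "well known," I would keep this part to a paragraph of indication plus a citation rather than a full derivation.

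\begin{rem}
If one prefers to avoid the matrix translation entirely, the forward direction can also be run directly on the surface: a pseudo-Anosov $\phi$ on $\Sigma_{1,1}$ has stable and unstable measured foliations that are not the meridian or longitude, so the sequence of integers recording how the invariant axis of $\phi$ in the Farey complex (= curve complex of $\Sigma_{1,1}$) turns left and right between the two arcs $a$ and $b$ is periodic with both symbols present; reading off this sequence produces the word \eqref{expression_conjugacy-class}, and its periodicity up to phase is the cyclic uniqueness.
\end{rem}
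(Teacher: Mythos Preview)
The paper does not prove this lemma: it is introduced with ``We first recall the following well known result'' and no argument is given. So there is no paper proof to compare against; your task here was only to supply what the authors assume.

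Your outline is the standard one and is correct in spirit. The converse via Penner's criterion is exactly right. For the forward direction, the identification with $\mathrm{SL}(2,\mathbb{Z})$ and the continued-fraction/Farey normal form for hyperbolic conjugacy classes is indeed the classical route. One point deserves a cleaner resolution than the parenthetical you give: in $\mathrm{SL}(2,\mathbb{Z})$ a hyperbolic matrix with trace $<-2$ is \emph{not} conjugate to a positive word in $M_L,M_R$ (trace is a conjugacy invariant and positive words have trace $>2$), so the lemma as stated is literally true only after passing to $\mathrm{PSL}(2,\mathbb{Z})$, i.e.\ after identifying $\phi$ with $\phi$ composed with the hyperelliptic involution. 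Since entropy and volume are unchanged by this, the ambiguity is harmless for the paper's applications, but in a written proof you should say explicitly that the normal form is for the image in $\mathrm{PSL}(2,\mathbb{Z})$ rather than leave it as a ``depending on convention'' aside.
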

\noindent 
The integer $\ell$ in the above is called the {\it block length} of $\phi$. 
\medskip

\begin{thm}
\label{thm_punctured-torus}
For each $\phi \in  \mathcal{M}^{\mathrm{pA}}(\Sigma_{1,1})$, we have 
\begin{equation*} 
	\frac{\mathrm{ent}(\phi)}{\mathrm{vol}(\phi)} > 
	\frac{ \log (\frac{3+  \sqrt{5}}{2})}{2  v_8}\approx 0.1313,
\end{equation*} 
where $v_8 \approx 3.6638$ is the volume of a regular ideal octahedron. 
\end{thm}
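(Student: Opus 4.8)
The plan is to bound the entropy from below and the volume from above separately, both in terms of the block length $\ell$ of $\phi$. For the entropy bound, the idea is to use Penner's construction: since $\phi$ is conjugate to a word $L^{m_1}R^{n_1}\cdots L^{m_\ell}R^{n_\ell} \in \mathcal{R}(\{a\},\{b\})$, the dilatation $\lambda(\phi)$ is the spectral radius of the incidence matrix $M$ on the train track $\tau$ built from $a\cup b$. On the once-punctured torus, $a\cup b$ fills with a single intersection point, so $\tau$ has very few branches and the generators $L$ and $R$ act by explicit small nonnegative integral matrices; for instance $L\mapsto \left(\begin{smallmatrix}1&1\\0&1\end{smallmatrix}\right)$ and $R\mapsto\left(\begin{smallmatrix}1&0\\1&1\end{smallmatrix}\right)$ on the appropriate weight coordinates. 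Then $M$ is a product of $\ell$ factors each of which is $\ge \left(\begin{smallmatrix}1&1\\1&1\end{smallmatrix}\right)$ after pairing an $L$-block with an $R$-block, so by the monotonicity half of the Perron--Frobenius theorem (Theorem~\ref{thm_PFtheorem}) one gets $\lambda(\phi)\ge \lambda\!\left(\left(\begin{smallmatrix}2&1\\1&1\end{smallmatrix}\right)^{\ell}\right)$; since the smallest eigenvalue exceeding $1$ of $\left(\begin{smallmatrix}2&1\\1&1\end{smallmatrix}\right)$ is $\frac{3+\sqrt5}{2}$, this yields
\begin{equation*}
\mathrm{ent}(\phi) \ge \ell\,\log\!\left(\frac{3+\sqrt5}{2}\right).
\end{equation*}

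For the volume bound, the plan is to realize $\mathbb{T}(\phi)$ by Dehn filling a fixed cusped manifold whose volume is controlled by $\ell$. Each block $L^{m_i}R^{n_i}$ can be viewed as obtained from the ``model'' block $LR$ by high Dehn twisting, i.e. by Dehn filling along curves encircling the twist regions; more precisely, $\mathbb{T}(L^{m_1}R^{n_1}\cdots L^{m_\ell}R^{n_\ell})$ is a Dehn filling of the manifold $N_\ell$ obtained from $\mathbb{T}((LR)^\ell)$ by drilling out the $2\ell$ core curves of the twist regions. By Thurston's Dehn filling theorem volume only decreases under filling, so $\mathrm{vol}(\phi) < \mathrm{vol}(N_\ell)$, and I claim $N_\ell$ decomposes into $2\ell$ ideal octahedra — it is the once-punctured-torus-bundle analogue where each $LR$ block with its two drilled curves contributes a regular ideal octahedron — giving $\mathrm{vol}(\phi) < 2\ell\, v_8$. (The identification of the building block with the octahedron is the classical picture: the $2$-bridge/once-punctured-torus monodromy $LR$ drilled at the twist cores is the Whitehead-link-like piece of volume $v_8$.) Dividing the two displayed inequalities gives $\mathrm{ent}(\phi)/\mathrm{vol}(\phi) > \log\!\big(\tfrac{3+\sqrt5}{2}\big)/(2v_8)$.

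The main obstacle I anticipate is the volume upper bound, specifically making precise the claim that $N_\ell$ — the mapping torus of the block word with all twist cores drilled — is built from exactly $2\ell$ ideal octahedra (or at least has volume $\le 2\ell v_8$). One must (i) set up the drilling so that $\mathbb{T}(\phi)$ really is a filling of $N_\ell$ regardless of the exponents $m_i,n_i \ge 1$, using that the $m_i=n_i=1$ case is the unfilled piece; (ii) identify the geometric decomposition of the $LR$-block-with-drilled-cores as a regular ideal octahedron, which is where the specific constant $v_8$ enters and where one expects to invoke a known computation (the once-punctured torus bundle with monodromy $RL$ is the figure-eight knot complement of volume $2v_3$; the relevant drilled object here is its sibling of volume $v_8$). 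The entropy side is essentially routine given Penner's theorem and Perron--Frobenius monotonicity; the bookkeeping there is just checking the $2\times 2$ matrices and that each paired $L$-then-$R$ contributes a factor $\ge \left(\begin{smallmatrix}2&1\\1&1\end{smallmatrix}\right)$ entrywise. Finally I would remark why the bound is not sharp: the block-length-$1$ case (Proposition~\ref{prop_block-length-1}) already does better because there the relevant single block needs only $v_3$ rather than $v_8$, so the constant $2v_8$ is wasteful whenever $\ell$ is small, and presumably also wasteful asymptotically since generic high-exponent fillings have volume bounded well below $2\ell v_8$.
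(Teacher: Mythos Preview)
Your proposal is correct and follows essentially the same route as the paper. The entropy half is identical: the paper also uses the incidence matrices $M_L=\left(\begin{smallmatrix}1&1\\0&1\end{smallmatrix}\right)$, $M_R=\left(\begin{smallmatrix}1&0\\1&1\end{smallmatrix}\right)$, observes $M_L^{m_1}M_R^{n_1}\cdots M_L^{m_\ell}M_R^{n_\ell}\ge (M_LM_R)^\ell$ entrywise, and applies Perron--Frobenius to get $\lambda(\phi)\ge \bigl(\tfrac{3+\sqrt5}{2}\bigr)^{\ell}$. For the volume half the paper does not carry out the drilling/octahedron argument you sketch; it simply quotes Agol \cite[Corollary~2.4]{Agol} for the inequality $\mathrm{vol}(\phi)<2\ell\,v_8$. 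Your drilling picture (realize $\Bbb T(\phi)$ as a Dehn filling of the $(LR)^\ell$--bundle with the $2\ell$ twist cores removed, then use that filling strictly decreases volume) is a legitimate way to recover that bound, so the only ``obstacle'' you flagged is exactly the step the paper outsources to Agol. Two small cleanups: the paired block satisfies $M_L^{m_i}M_R^{n_i}\ge M_LM_R=\left(\begin{smallmatrix}2&1\\1&1\end{smallmatrix}\right)$, not merely $\ge\left(\begin{smallmatrix}1&1\\1&1\end{smallmatrix}\right)$; and your octahedron count should be made consistent (you want $2\ell$ octahedra total, i.e.\ two per $LR$--block, to land on $2\ell\,v_8$).
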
 

\begin{proof}  
Let $M_L = \left(\begin{array}{cc}1 & 1 \\0 & 1\end{array}\right)$ and 
$M_R = \left(\begin{array}{cc}1 & 0 \\1 & 1\end{array}\right)$.  
The matrix  $M_L$ is the incident matrix for $L$ and the matrix $M_R$ is 
the incident matrix for $R$. 
Suppose that $\phi = L^{m_1} R^{n_1} \cdots L^{m_{\ell}} R^{n_{\ell}}$ of block length $\ell$. 
Then 
\begin{equation*} 
	M'= M_L^{m_1} M_R^{n_1} M_L^{m_2} M_R^{n_2} \cdots 
	M_L^{m_{\ell}} M_R^{n_{\ell}} 
\end{equation*} 
is the incident matrix for $\phi$. 
Since  $M' \ge (M_L M_R)^{\ell}$, 
the largest eigenvalue of $M'$ is greater than that of $(M_L M_R)^{\ell}$. 
We thus have 
\begin{equation*} 
	\lambda(\phi) \ge \lambda ((LR)^{\ell} ) = \Bigl(\frac{3 + \sqrt{5}}{2} \Bigr)^{\ell}.
\end{equation*}

On the other hand, using a result \cite[Corollary~2.4]{Agol} of Agol, 
\begin{equation}
\label{equation_Volume-Estimate}
\mathrm{vol}(\phi) < 2 \ell v_8. 
\end{equation}
Hence we have 
$$
\frac{\mathrm{ent}(\phi)}{ \mathrm{vol}(\phi)} > \frac{\ell \cdot \log (\frac{3+  \sqrt{5}}{2})}{2 \ell v_8} = 
\frac{ \log (\frac{3+  \sqrt{5}}{2})}{2  v_8}\approx 0.1313.  \
$$
\end{proof}

With the aid of SnapPea, one can show:

\begin{prop} 
\label{prop_block-length-1}
For each $\phi \in \mathcal{M}^{\mathrm{pA}}(\Sigma_{1,1})$ of block length $1$  
\begin{equation*} 
	\frac{\mathrm{ent}(\phi)}{\mathrm{vol}(\phi)} \ge   
	\frac{\mathrm{ent}(LR)}{ \mathrm{vol}(LR)} =  \frac{ \log (\frac{3+  \sqrt{5}}{2})}{2  v_3} 
	\approx 0.4741.
\end{equation*}
\end{prop}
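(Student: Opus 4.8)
The plan is to exploit the canonical form of Lemma~\ref{thm_canonical-form}: a block-length-$1$ mapping class is, up to conjugacy, $\phi = L^m R^n = t_a^m t_b^{-n}$ for positive integers $m,n$, and conjugate mapping classes have the same entropy and the same mapping torus, hence the same volume. So it suffices to bound $\mathrm{ent}(L^m R^n)/\mathrm{vol}(L^m R^n)$ from below over all $(m,n)$, and to check equality is attained at $(m,n)=(1,1)$. For the entropy, the incident matrix is $M_L^m M_R^n = \left(\begin{smallmatrix}1 & m \\ 0 & 1\end{smallmatrix}\right)\left(\begin{smallmatrix}1 & 0 \\ n & 1\end{smallmatrix}\right) = \left(\begin{smallmatrix}1+mn & m \\ n & 1\end{smallmatrix}\right)$, whose larger eigenvalue is $\lambda(m,n) = \tfrac{2+mn+\sqrt{(mn)^2+4mn}}{2}$, an increasing function of the product $mn$; in particular $\lambda(m,n)\ge \lambda(1,1) = \tfrac{3+\sqrt5}{2}$, with equality iff $mn=1$. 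So $\mathrm{ent}(L^mR^n)\ge \log\tfrac{3+\sqrt5}{2}$ always.

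The volume side is where the real work lies, and where SnapPea enters. The mapping torus $\mathbb{T}(L R)$ is the figure-eight knot complement (or its sister), which is two regular ideal tetrahedra, so $\mathrm{vol}(LR) = 2v_3$ and the claimed equality $\mathrm{ent}(LR)/\mathrm{vol}(LR) = \log\tfrac{3+\sqrt5}{2}/(2v_3)$ is immediate. For general $(m,n)$ one wants $\mathrm{vol}(L^m R^n) \ge 2 v_3 \cdot \dfrac{\log\lambda(m,n)}{\log\frac{3+\sqrt5}{2}}$, i.e. the ratio never drops below its value at $(1,1)$. The strategy is: (i) observe $\mathbb{T}(L^m R^n)$ is obtained from the once-punctured-torus bundle $\mathbb{T}(LR)$ — really from a cusped parent manifold, the magic-like bundle or the Borromean-type link obtained by "drilling'' along the curves $a$ and $b$ — by Dehn filling, so by Thurston's hyperbolic Dehn surgery theorem the volumes $\mathrm{vol}(L^m R^n)$ are bounded above by the volume of that parent manifold and increase toward it monotonically as $m,n\to\infty$. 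Concretely, $\bigcup_{m,n}\mathbb{T}(L^mR^n)$ has a common upper bound $V_\infty$ for the volume, while $\log\lambda(m,n)\to\infty$; so the ratio $\to\infty$ and only finitely many $(m,n)$ can have ratio below any given threshold. (ii) For those finitely many small $(m,n)$ — a short explicit list, say with $mn$ below some modest bound — compute $\mathrm{vol}(L^m R^n)$ directly in SnapPea and check the ratio exceeds $\log\tfrac{3+\sqrt5}{2}/(2v_3)$ in each case. Combining (i) and (ii) gives the bound for all $(m,n)$.

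The main obstacle is step (i): pinning down the correct cusped parent manifold and the precise Dehn-filling description so that the monotonicity/finiteness argument is rigorous rather than heuristic, and making sure the exceptional (non-hyperbolic or small-volume) fillings are exactly the ones handled by the finite SnapPea check. One clean route is to use Agol's estimate (\ref{equation_Volume-Estimate}) from the proof of Theorem~\ref{thm_punctured-torus}, which for block length $1$ reads $\mathrm{vol}(\phi) < 2v_8$; since $\log\lambda(m,n)\to\infty$ while $\mathrm{vol}<2v_8$ is a fixed ceiling, the ratio tends to infinity, so only finitely many $(m,n)$ need be inspected, and that finite list is what SnapPea disposes of. The remaining care is purely bookkeeping: verifying the threshold $mn \le$ (some explicit bound) captures every potentially-small case, and that for each such braid $t_a^m t_b^{-n}$ SnapPea returns a volume large enough. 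I would also note $\mathrm{vol}(L^m R^n)$ is nondecreasing in $m$ and $n$ separately (again Dehn filling on $a$ and on $b$ decreases volume, by \cite[Proposition~6.5.2]{Thurston} as used in the proof of Theorem~\ref{thm_entdiverge}), which further shortens the list to be checked numerically and makes the $(1,1)$ case visibly extremal.
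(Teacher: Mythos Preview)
Your proposal is correct and follows essentially the same approach as the paper: use Agol's bound $\mathrm{vol}(\phi) < 2v_8$ from (\ref{equation_Volume-Estimate}) together with the explicit formula $\lambda(L^mR^n) = \tfrac{2+mn+\sqrt{(mn)^2+4mn}}{2}$ to reduce to a finite list, then dispose of that list with SnapPea. The paper carries out the bookkeeping you allude to, obtaining the explicit cutoff $mn \le 31$ from $\lambda < e^{c_{1,1}^1 \cdot 2v_8} < e^{3.4748} < 33$; your additional remarks about Dehn-filling monotonicity and a parent manifold are not needed (and not used) once Agol's bound is in hand.
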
 

\begin{proof} 
Let $c_{1,1}^1= \displaystyle{\frac{\mathrm{ent}(LR)}{ \mathrm{vol}(LR)}}$. 
If $\mathrm{ent}(\phi) \ge c_{1,1}^1 \cdot 2 v_8$, 
then $ \displaystyle{\frac{\mathrm{ent}(\phi)}{ \mathrm{vol}(\phi)}} > c_{1,1}^1$ 
since $\mathrm{vol}(\phi) < 2 v_8$ 
(see (\ref{equation_Volume-Estimate})).  
Let 
$$Y= \{ \phi \in \mathcal{M}^{\mathrm{pA}}(\Sigma_{1,1}) \ \mbox{of\ block\ length\ }1\ |\ \mathrm{ent}(\phi) <  c_{1,1}^1 \cdot 2 v_8 < 3.4748\}.$$
This is a finite set. 

If $\phi$ is written as $L^m R^n$, then $\lambda(\phi)$ is 
the largest eigenvalue of $\left(\begin{array}{cc}1+mn & m \\n & 1\end{array}\right)$. 
Thus 
\begin{equation*} 
	\lambda(\phi)= \frac{2+mn+ \sqrt{4mn+(mn)^2}}{2}.
\end{equation*} 
If $L^m R^n \in Y$, then 
$\displaystyle{\frac{2+mn+ \sqrt{4mn+(mn)^2}}{2}} < e^{3.4748} < 33$. 
Hence we have $mn \le 31$. 
We compute 
$\displaystyle{\frac{\mathrm{ent}(\phi)}{ \mathrm{vol}(\phi)}}$ 
for each $\phi = L^m R^n$ with $mn \le 31$ by SnapPea, and we see that 
 it is greater than or  equal to $ c_{1,1}^1$. 
\end{proof}

In the case of $\Sigma_{1,1}$ and $D_3$, we propose the following conjectures: 

\begin{conj}
$$\mathcal{I}(\Sigma_{1,1})= \frac{\mathrm{ent}(LR)}{\mathrm{vol}(LR)}
\approx 0.4741\ \hspace{2mm} \mbox{and}\  \hspace{2mm}
\mathcal{I}(D_3)= \frac{\mathrm{ent}(\sigma_1 \sigma_2^{-1})}{\mathrm{vol}(\sigma_1 \sigma_2^{-1})} 
\approx 0.2370.$$
\end{conj}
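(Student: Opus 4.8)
The plan is to turn the conjecture into a comparison organized by the block length $\ell$ of Lemma~\ref{thm_canonical-form}. The ratio $\mathrm{ent}(\phi)/\mathrm{vol}(\phi)$ depends only on the conjugacy class of $\phi$, and it is invariant under taking powers: $\mathrm{ent}(\phi^n)=n\,\mathrm{ent}(\phi)$, while ${\Bbb T}(\phi^n)$ is an $n$-fold cyclic cover of ${\Bbb T}(\phi)$, so $\mathrm{vol}(\phi^n)=n\,\mathrm{vol}(\phi)$. Hence by Lemma~\ref{thm_canonical-form} it suffices to minimize $\log\lambda(w)/\mathrm{vol}(w)$ over primitive cyclically reduced words $w=L^{m_1}R^{n_1}\cdots L^{m_\ell}R^{n_\ell}$. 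Writing $R_0:=\tfrac{\log((3+\sqrt{5})/2)}{2v_3}$, observe that $(LR)^\ell$ is the $\ell$-th power of $LR$ and therefore returns the value $R_0$ for every $\ell$; so the whole content is to show that no primitive word beats $R_0$, with equality forced onto the powers of $LR$. The statement for $\Sigma_{1,1}$ would then follow, and I would treat $D_3$ afterward by an analogous reduction.

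Next I would describe both invariants directly from $w$. Setting $M_w=M_L^{m_1}M_R^{n_1}\cdots M_L^{m_\ell}M_R^{n_\ell}\in \mathrm{SL}(2,{\Bbb Z})$, one has $\det M_w=1$, so $\lambda(w)$ is the larger root of $t^2-(\operatorname{tr}M_w)\,t+1$; thus $\mathrm{ent}(w)$ is a strictly increasing function of the integer $\operatorname{tr}M_w\ge 3$, and by the final assertion of Theorem~\ref{thm_PFtheorem} (exactly as in the proof of Theorem~\ref{thm_entdiverge}) the trace, hence the entropy, strictly increases whenever any exponent $m_i$ or $n_i$ is raised. For the volume I would use the monodromy (Floyd--Hatcher) ideal triangulation of the once-punctured torus bundle, realized geometrically by Gu\'eritaud \cite{GF}: ${\Bbb T}(\phi)$ is glued from exactly $|w|=\sum_i(m_i+n_i)$ positively oriented ideal tetrahedra, one per letter of $w$, so that $\mathrm{vol}(w)=\sum_{i=1}^{|w|}\mathrm{vol}(\Delta_i)\le |w|\,v_3$. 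Equality here forces every $\Delta_i$ to be regular, which one checks happens only for the figure-eight bundle $w\sim LR$; this identifies the unique equality case and is consistent with $\mathrm{vol}(LR)=2v_3$.

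The heart of the proof is a monotonicity claim: for each fixed block length $\ell$, the ratio $\log\lambda(w)/\mathrm{vol}(w)$ attains its minimum at the all-exponents-one word $(LR)^\ell$, where it equals $R_0$. The case $\ell=1$ is Proposition~\ref{prop_block-length-1}: there the explicit trace $2+mn$ of $L^mR^n$ together with $\mathrm{vol}<2v_8$ from (\ref{equation_Volume-Estimate}) reduces the claim to finitely many exponent pairs checked by SnapPea. For general $\ell$ I would imitate this. If some word of block length $\ell$ had ratio below $R_0$, then $\mathrm{ent}(w)<R_0\,\mathrm{vol}(w)<R_0\cdot 2\ell v_8$ by (\ref{equation_Volume-Estimate}), so its entropy, hence its trace, hence its exponents, are bounded in terms of $\ell$, leaving a finite list of bundles to verify directly. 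Since every $(LR)^\ell$ contributes exactly $R_0$, establishing this per-$\ell$ minimization yields $\mathcal{I}(\Sigma_{1,1})=R_0$, attained at $LR$.

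The decisive difficulty is that this finite verification is uniform in $\ell$ only if one can either bound $\ell$ or settle the per-$\ell$ statement once and for all, and the crude estimates do neither: pairing $\mathrm{vol}<2\ell v_8$ with $\lambda(w)\ge((3+\sqrt{5})/2)^\ell$ from Theorem~\ref{thm_punctured-torus} only reproduces the weak constant $0.1313$, independent of $\ell$, while (as one already sees for $w=L^2R$) the sharper bound $\mathrm{vol}(w)\le |w|v_3$ cannot be paired with a matching lower bound $\lambda(w)\ge((3+\sqrt{5})/2)^{|w|/2}$, since the latter fails whenever $w$ has a long syllable. A uniform proof therefore seems to require the genuinely coupled inequality $\log\lambda(w)\ge R_0\,\mathrm{vol}(w)$, controlling Gu\'eritaud's solved tetrahedron shapes (through their Neumann--Zagier data) against the algebraic trace of $M_w$; this link between the combinatorial invariant $\operatorname{tr}M_w$ and the transcendental sum $\sum\mathrm{vol}(\Delta_i)$ is precisely what is missing, and is what keeps the statement conjectural. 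For $D_3$ I would run the parallel argument through the branched-double-cover correspondence between $3$-braid monodromies and once-punctured torus maps, under which $\sigma_1\sigma_2^{-1}$ plays the role of $LR$ (same dilatation $(3+\sqrt{5})/2$) and $\overline{\sigma_1\sigma_2^{-1}}$ is the figure-eight knot together with its braid axis; the only change is that the extra axis cusp alters the volume bookkeeping, and I expect the same coupling to be the essential obstruction.
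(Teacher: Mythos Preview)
The paper does not prove this statement: it is labeled a Conjecture and left open, supported only by the lower bound of Theorem~\ref{thm_punctured-torus}, the block-length-$1$ case of Proposition~\ref{prop_block-length-1}, and the experimental evidence of Section~\ref{section_EandO}. There is therefore no proof in the paper to compare your proposal against.

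Your outline is an honest and accurate diagnosis of why the conjecture remains open rather than a proof of it, and you say as much in your final paragraph. The reductions you make (conjugacy and power invariance, the canonical form of Lemma~\ref{thm_canonical-form}, entropy from $\operatorname{tr}M_w$, volume from Gu\'eritaud's monodromy triangulation into $|w|$ positively oriented ideal tetrahedra) are all correct, and your identification of the obstruction is on target: the Agol-type bound $\mathrm{vol}(\phi)<2\ell v_8$ paired with $\lambda(\phi)\ge((3+\sqrt5)/2)^\ell$ only recovers the constant of Theorem~\ref{thm_punctured-torus}, while the sharper $\mathrm{vol}(w)\le|w|v_3$ has no matching entropy lower bound in $|w|$ once long syllables appear. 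The per-$\ell$ finite check you propose genuinely works for each fixed $\ell$ (it is exactly the mechanism of Proposition~\ref{prop_block-length-1}), but without a uniform argument across $\ell$ it does not establish the infimum, and the coupled inequality $\log\lambda(w)\ge R_0\sum_i\mathrm{vol}(\Delta_i)$ relating the trace to the solved shape parameters is, as you note, the missing ingredient. In short, your proposal is a clear roadmap to the difficulty, not a resolution of it; the paper's authors are in the same position.
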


%
%


%
%

\end{document}